\theoremstyle{plain}
\newtheorem{theorem}{Theorem}[section]
\newtheorem{lemma}[theorem]{Lemma}
\newtheorem{proposition}[theorem]{Proposition}
\newtheorem{question}[theorem]{Question}
\newtheorem{conjecture}[theorem]{Conjecture}
\newtheorem{corollary}[theorem]{Corollary}
\theoremstyle{definition}
\newtheorem{remark}[theorem]{Remark}
\def\F{{\mathbb F}}
\def\Fq{{\mathbb F}_q}
\def\Fqm{{\mathbb F}_{q^m}}
\def\Fqt{{\mathbb F}_{q^t}}
\def\Fqmn{{\mathbb F}_{q^{mn}}}
\def\sqmn{S_q(m,n)}
\def\nqmn{N_q(m,n)}
\def\dmnq{\Delta_q(m,n)}
\def\tsr{{\rm TSR}}
\def\tsri{{\rm TSRI}}
\def\tsrp{{\rm TSRP}}
\newcommand{\GL}{\operatorname{GL}}
\newcommand{\M}{\operatorname{M}}
\newcommand{\pmn}{{\EuScript P}(mn;q)}
\newcommand{\imn}{{\EuScript I}(mn;q)}
\newcommand{\tsrmnq}{{\rm TSR}(m,n;q)}
\newcommand{\ptsrmnq}{{\rm TSR^*}(m,n;q)}
\newcommand{\tsrimnq}{{\rm TSRI}(m,n;q)}
\newcommand{\tsrpmnq}{{\rm TSRP}(m,n;q)}
\newcommand{\I}[2]{{\EuScript I}(#1;#2)}
\def\phitm{\phi_{T_{(m)}}}
\def\imod#1{\allowbreak\mkern10mu({\operator@font mod}\,\,#1)}
\title{Enumeration of Linear Transformation Shift Registers}
\author{Samrith Ram}
\address{Institut de Math\'{e}matiques de Luminy \newline \indent
Luminy Case 907 \newline \indent
13288 Marseille Cedex 9 \newline \indent
France \newline \indent}
\email{samrith@gmail.com}
\keywords{Block companion matrix, Linear Feedback Shift Register (LFSR), Self-reciprocal polynomial, Splitting subspace, Transformation Shift Register (TSR)}
\subjclass[2010]{12E05, 15B33, 11T71}
\begin{document}
%replace1
\begin{abstract}
We consider the problem of counting the number of linear transformation shift registers (TSRs) of a given order over a finite field. We derive explicit formulae for the number of irreducible TSRs of order two. An interesting connection between TSRs and self-reciprocal polynomials is outlined. We use this connection and our results on TSRs to deduce a theorem of Carlitz on the number of self-reciprocal irreducible monic polynomials of a given degree over a finite field.    
\end{abstract}
%replace1%
\maketitle
%replace2
\section{Introduction}
A linear feedback shift register (LFSR) is a mechanism for generating a sequence in a finite field. LFSRs have a plethora of practical applications and are frequently used in generating pseudorandom numbers, fast digital counters and stream ciphers. A generalization of LFSR called word-oriented feedback shift register ($\sigma$-LFSR) was considered by Zeng, Han and He \cite{Zeng}. For LFSRs as well as $\sigma$-LFSRs, those that are primitive (i.e., for which the corresponding infinite sequence is of maximal possible period) are of particular interest. The following conjecture was proposed in the binary case in \cite{Zeng} and was extended to the $q$-ary case in \cite{GSM}:
\begin{conjecture}
\label{zhhconj}
For positive integers $m$ and $n$, the number of primitive $\sigma$-LFSRs of order $n$ over $\Fqm$ is given by
\begin{equation}
\label{noprimlfsr}
\frac{\phi(q^{mn}-1)}{mn} \, q^{m(m-1)(n-1)} \prod_{i=1}^{m-1}(q^m-q^i). 
\end{equation}
\end{conjecture}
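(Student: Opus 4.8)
The plan is to factor the count in \eqref{noprimlfsr} as the number of primitive polynomials of degree $mn$ over $\Fq$ times the number of admissible feedback configurations realizing a fixed such polynomial, and to handle the second, harder factor through splitting subspaces. First I would record the standard correspondence between a $\sigma$-LFSR of order $n$ over $\Fqm$ and its $mn\times mn$ transition matrix over $\Fq$: this is a block companion matrix $M$ whose $n$ feedback blocks $C_0,\dots,C_{n-1}$ lie in $\M_m(\Fq)$, acting on $\Fq^{mn}=V_1\oplus\cdots\oplus V_n$ with $M$ shifting the coordinate blocks $V_1\mapsto V_2\mapsto\cdots$. The register is primitive exactly when the state sequence attains the maximal period $q^{mn}-1$, and this holds if and only if $M$ is conjugate in $\GL_{mn}(\Fq)$ to multiplication by a primitive element of $\Fqmn$, equivalently if and only if the characteristic polynomial of $M$ is a primitive polynomial of degree $mn$. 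Since each primitive element of $\Fqmn$ has a degree-$mn$ minimal polynomial and each primitive polynomial has $mn$ conjugate roots, there are precisely $\phi(q^{mn}-1)/(mn)$ primitive polynomials, giving the first factor.

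It then suffices to show that for a \emph{fixed} irreducible (in particular any primitive) polynomial $f$ of degree $mn$, the number $B(f)$ of block companion matrices with characteristic polynomial $f$ is independent of $f$ and equals $q^{m(m-1)(n-1)}\prod_{i=1}^{m-1}(q^m-q^i)$. The key step is a bijection with splitting subspaces. Put $K=\Fqmn=\Fq[x]/(f)$ and let $\alpha$ be the class of $x$, so multiplication by $\alpha$ has characteristic polynomial $f$. A block companion matrix $M$ with characteristic polynomial $f$ makes $\Fq^{mn}$ a one-dimensional $K$-vector space on which $\alpha$ acts as $M$; choosing a $K$-linear isomorphism $\psi\colon\Fq^{mn}\to K$ (unique up to $K^{\times}$) and setting $W=\psi(V_1)$, the identities $\psi(M^{k}V_1)=\alpha^{k}W$ together with $V_1\oplus\cdots\oplus M^{n-1}V_1=\Fq^{mn}$ force $K=W\oplus\alpha W\oplus\cdots\oplus\alpha^{n-1}W$, so $W$ is an $m$-dimensional $\alpha$-splitting subspace and the $\psi$-images of the standard basis of $V_1$ form an ordered basis of $W$. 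Conversely an ordered basis of any splitting subspace reconstructs $M$ uniquely, and $K^{\times}$ acts freely on ordered bases of splitting subspaces with $M$ corresponding to an orbit; tracking this ambiguity yields
\begin{equation*}
B(f)=\frac{N_{\mathrm{split}}\cdot|\GL_m(\Fq)|}{q^{mn}-1},\qquad |\GL_m(\Fq)|=\prod_{i=0}^{m-1}(q^m-q^i),
\end{equation*}
where $N_{\mathrm{split}}$ is the number of $m$-dimensional $\alpha$-splitting subspaces of $\Fqmn$.

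The crux, and the step I expect to be the main obstacle, is the evaluation $N_{\mathrm{split}}=\frac{q^{mn}-1}{q^m-1}\,q^{m(m-1)(n-1)}$. This is precisely the splitting-subspace enumeration problem, which is delicate because splitting subspaces are not the orbit of a transitive group action; I would attack it by a recursive count over partial direct sums $W\oplus\alpha W\oplus\cdots\oplus\alpha^{k-1}W$, peeling off one block at a time while controlling the intersection with $\alpha^{k}W$, or alternatively by a M\"obius/inclusion--exclusion argument on the lattice of $\alpha$-invariant subspaces. Granting this value, substitution gives $B(f)=q^{m(m-1)(n-1)}\prod_{i=1}^{m-1}(q^m-q^i)$, independent of $f$, and multiplying by the $\phi(q^{mn}-1)/(mn)$ primitive polynomials produces \eqref{noprimlfsr}. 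As a sanity check, when $m=1$ every nonzero vector spans a splitting line, so $N_{\mathrm{split}}=(q^n-1)/(q-1)$, whence $B(f)=1$ and the formula collapses to the classical count $\phi(q^n-1)/n$ of primitive LFSRs of order $n$ over $\Fq$.
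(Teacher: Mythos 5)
Your reduction is sound, and it retraces exactly the chain of equivalences on which the paper itself relies: primitive $\sigma$-LFSRs correspond to block companion matrices with primitive characteristic polynomial; there are $\phi(q^{mn}-1)/mn$ primitive polynomials of degree $mn$; and for a fixed irreducible $f$ with root $\alpha$, the block companion matrices with characteristic polynomial $f$ are in bijection with $K^{\times}$-orbits of pairs (an $m$-dimensional $\alpha$-splitting subspace $W$, an ordered basis of $W$), where $K=\Fqmn$, which gives your identity $B(f)=N_{\mathrm{split}}\,|\GL_m(\Fq)|/(q^{mn}-1)$. This equivalence is precisely the one the paper attributes to Ghorpade and Ram, your arithmetic from there to \eqref{noprimlfsr} is correct, and your $m=1$ sanity check is valid.

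The genuine gap is that the entire mathematical content of the conjecture sits in the one step you defer: the evaluation $N_{\mathrm{split}}=q^{m(m-1)(n-1)}(q^{mn}-1)/(q^m-1)$. This is the Splitting Subspace Conjecture (Question \ref{howmanysplit} of the paper, going back to Niederreiter), which remained open for years and was settled only by the Chen--Tseng theorem that the paper cites; the paper does not prove Conjecture \ref{zhhconj} internally but derives it from that external result. Moreover, the two methods you sketch for proving it would not work as described. A M\"obius argument ``on the lattice of $\alpha$-invariant subspaces'' is vacuous: since $\Fqmn=\Fq(\alpha)$, multiplication by $\alpha$ acts irreducibly, so the only $\alpha$-invariant subspaces are $0$ and $\Fqmn$. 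And the naive recursion peeling off $W,\alpha W,\dots$ founders on exactly the known obstacle: the number of admissible ways to extend a partial configuration depends on the configuration itself, not merely on dimensions, so no simple product formula falls out; this is why even the case $m=2$ required a substantial separate argument in the literature. As written, ``granting this value'' grants the whole theorem: your proposal is a correct reduction of the conjecture to a known-hard result, not a proof. To complete it you would need to either reproduce the Chen--Tseng argument or invoke it explicitly as an input.
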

The notion of $\sigma$-LFSR is essentially equivalent to that of a splitting subspace previously defined by Niederreiter \cite{N2}: Given positive integers $m,n$ and $\alpha \in \Fqmn$, an $m$-dimensional $\Fq$-linear subspace of $\Fqmn$ is said to be $\alpha$-splitting if 
$$
\Fqmn=W\oplus \alpha W\oplus \cdots \oplus \alpha^{n-1} W.
$$
Splitting subspaces were studied by Niederreiter \cite{N2} in the context of his work on the multiple recursive matrix method for pseudorandom number generation. In his paper \cite[p. 11]{N2}, he asked the following question, stating it was an open problem: If $\alpha$ generates the cyclic group $\Fqmn^*$, what is the number of $m$-dimensional $\alpha$-splitting subspaces? More generally, we may ask: 
\begin{question}
\label{howmanysplit}
Given $\alpha \in \Fqmn$ such that $\Fqmn=\Fq(\alpha)$, what is the number of $m$-dimensional $\alpha$-splitting subspaces? 
\end{question}
It was shown in \cite{m=2} that the problem of enumeration of splitting subspaces is equivalent to counting certain block companion matrices which turn out to be the state transition matrices of $\sigma$-LFSRs. We refer to Ghorpade, Hasan and Kumari~\cite{GSM}, Ghorpade and Ram \cite{m=2,split} and Chen and Tseng \cite{sscffa} for recent progress on the above question. In particular, the work of Chen and Tseng answers Question \ref{howmanysplit} completely by proving the Splitting Subspace Conjecture \cite[Conj. 5.5]{m=2}. The Splitting Subspace Conjecture (SSC) proves the Primitive Fiber Conjecture \cite[Conj. 2.3]{m=2} which in turn settles Conjecture \ref{zhhconj} in the affirmative. The SSC also establishes the Irreducible Fiber Conjecture \cite[Conj. 2.4]{m=2} which leads to a formula, similar to \eqref{noprimlfsr}, for the number of irreducible $\sigma$-LFSR of order $n$ over $\Fqm$:
$$
\left(\sum_{d\mid mn}\mu(d)q^{mn/d}\right) q^{m(m-1)(n-1)} \prod_{i=1}^{m-1}(q^m-q^i). 
$$

A subcategory of $\sigma$-LFSRs called transformation shift registers (TSRs) was considered by Tsaban and Vishne \cite{TV} to solve a problem of Preneel \cite{Preneel}. We refer to the papers of Dewar and Panario \cite{MdDp,mutualirred} for subsequent developments on TSRs. It turns out that the TSRs have very good cryptographic properties when the corresponding characteristic polynomial is primitive. Tsaban and Vishne noted in \cite{TV} that irreducible TSRs contain a high proportion of primitive TSRs. This motivates the study of irreducible TSRs in Section \ref{irreducibletsr}.

While $\sigma$-LFSRs have been studied in great detail, very little is known about the TSRs; indeed, given positive integers $m,n$ and a prime power $q$, it is not even known if there exists an irreducible TSR of order $n$ over $\Fqm$. A more difficult problem would be to determine the number of primitive or irreducible TSRs.  

In this paper, we adopt a matrix theoretic approach to enumerating TSRs by working with their state transition matrices. We are mainly interested in the
number of irreducible TSRs of a given order. We derive a recurrence which leads
to a formula \eqref{eq:n=2} for the number of irreducible TSRs of order two
over $\Fqm$ for arbitrary $m$. We then outline a connection between
irreducible TSRs and self reciprocal polynomials and give a simple method to
construct such TSRs from self-reciprocal polynomials. The results on TSRs are
used to deduce a theorem of Carlitz \cite{Car} on the number of self-reciprocal irreducible monic polynomials of a given degree over a finite field. Finally, we obtain bounds on the number of
primitive TSRs and the number of irreducible TSRs.  

\section{Preliminaries}
Throughout this paper, $m$ and $n$ are positive integers and $q$ is a prime power. We define a $(m,n)$-\emph{\tsr} matrix over $\Fq$ to be a matrix $T\in \M_{mn}(\Fq)$ of the form

%shall mean a $mn \times mn$ matrix $T$ of the following block form:
\begin{equation}
\label{tsr}
T =
\begin {pmatrix}
\mathbf{0} & \mathbf{0} & \mathbf{0} & . & . & \mathbf{0} & \mathbf{0} &c_0 B\\
I_m & \mathbf{0} & \mathbf{0} & . & . & \mathbf{0} & \mathbf{0} & c_1 B\\
. & . & . & . & . & . & . & .\\
. & . & . & . & . & . & . & .\\
\mathbf{0} & \mathbf{0} & \mathbf{0} & . & . & I_m & \mathbf{0} & c_{n-2} B\\
\mathbf{0} & \mathbf{0} & \mathbf{0} & . & . & \mathbf{0} & I_m & c_{n-1} B
\end {pmatrix}, %_{mn \times mn}
\end{equation}
where $ c_0, \dots , c_{n-1}\in \Fq$, $B\in \M_m(\Fq)$ and $I_m$ denotes the $m\times m$ identity matrix over $\Fq$, while $\mathbf{0}$ indicates the zero matrix in $\M_m(\Fq)$. We denote by $\tsrmnq$ the set of all $(m,n)$-TSR matrices over $\Fq$. Matrices in $\tsrmnq$ are precisely the state transition matrices \cite[\S 4]{SPW} of TSRs of order $n$ over $\Fqm$.\footnote{By `order' of a TSR, we mean the order of the recurrence relation defining the TSR, not the multiplicative order of the corresponding state transition matrix (if and when it lies in $\GL_{mn}(\Fq)$). We follow this convention throughout.} We often identify TSRs with their corresponding state transition matrices by referring to `TSR matrix' as simply `TSR'. The map 
$$
\Phi :\M_{mn}(\Fq) \to \Fq[X] \quad \mbox{ defined by } \quad \Phi(T): = \det\left(XI_{mn} - T \right)
$$
will be referred to as the \emph{characteristic map}. The restriction of $\Phi$ to $\tsrmnq$ will be denoted by $\Phi_{(m,n)}$.  The matrices in $\tsrmnq$ which have a primitive characteristic polynomial over $\Fq$ are denoted by $\tsrpmnq$ and those with irreducible characteristic polynomial are denoted by $\tsrimnq$. For each positive integer $r$, we denote by ${\EuScript I}(r;q)$ and ${\EuScript P}(r;q)$ the set of monic irreducible polynomials in $\Fq[X]$ of degree $r$ and the set of primitive polynomials in $\Fq[X]$ of degree $r$ respectively. Thus $\Phi$ maps $\tsrimnq$ into $\imn$ and $\tsrpmnq$ into $\pmn$. The restrictions of $\Phi$ yield the following maps:
\begin{equation}
\label{restrictions}
\Phi_P: \tsrpmnq \to \pmn \quad \mbox{and} \quad \Phi_I: \tsrimnq\to \imn.
\end{equation}

We denote the intersection $\tsrmnq \cap \GL_{mn}(\Fq)$ by $\ptsrmnq$. Elements of $\ptsrmnq$ are precisely \cite[Prop. 4]{SPW} the state transition matrices of periodic TSRs. Alternatively, $\ptsrmnq$ consists of precisely those matrices in $\tsrmnq$ whose characteristic polynomial does not vanish at zero. It follows easily from \eqref{tsr} that $T\in \ptsrmnq$ if and only if $T$ is of the form 
$$
\begin {pmatrix}
\mathbf{0} & \mathbf{0} & \mathbf{0} & . & . & \mathbf{0} & \mathbf{0} & A\\
I_m & \mathbf{0} & \mathbf{0} & . & . & \mathbf{0} & \mathbf{0} & c_1 A\\
. & . & . & . & . & . & . & .\\
. & . & . & . & . & . & . & .\\
\mathbf{0} & \mathbf{0} & \mathbf{0} & . & . & I_m & \mathbf{0} & c_{n-2} A\\
\mathbf{0} & \mathbf{0} & \mathbf{0} & . & . & \mathbf{0} & I_m & c_{n-1} A
\end {pmatrix}
$$
where $A\in \GL_m(\Fq)$ and $c_i \in \Fq$ for $1 \leq i \leq n-1$. The characteristic polynomial of $T$ is given by (\cite[Lemma 1]{SPW})
\begin{equation}
\label{charTSR}
\Phi(T)=\det(X^n I_n-g_T(X)T_{(m)}),
\end{equation}
where $g_T(X)=1+c_1 X+\ldots+c_{n-1}X^{n-1}\in \Fq[X]$ and $T_{(m)}$ denotes the submatrix of $T$ formed by the first $m$ rows and last $m$ columns of $T$. Note that $T$ is uniquely determined by $g_T(X)$ and $T_{(m)}$. It is easy to see that $\tsrimnq \subseteq \ptsrmnq$ for $\max\{m,n\}>1$. In what follows, we always assume $\max\{m,n\}>1$ unless otherwise stated. 

For every matrix $M$ we denote by $\phi_M(X)$ the characteristic polynomial of $M$. It follows from \eqref{charTSR} that for $T \in \ptsrmnq$
\begin{equation}
\label{phitphia}
\phi_T(X)=g_T(X)^m \phi_{T_{(m)}}\left(\frac{X^n}{g_T(X)}\right)
\end{equation}

Thus if $\phi_T(X)$ is irreducible in $\Fq[X]$, then so is $\phi_{T_{(m)}}(X)$. However, the converse is not true in general. For example if $g_T(X)=1$, then 
$$
\phi_T(X)=\phitm(X^n)
$$ 
which is not irreducible when $n$ is a multiple of $q$. If $\phi_T(X)$ is primitive in $\Fq[X]$, then it is not necessarily true that $\phitm(X)$ is primitive. Consider $T\in \tsr(1,2;3)$ given by 
\begin{equation*}
T=
\begin{pmatrix}
0 & 1 \\
1 & 1 
\end{pmatrix}
\end{equation*}
In this case $\phi_T(X)=X^2-X-1$ is primitive but $\phitm(X)=X-1$ is not.

The next proposition describes the form of $\phitm(X)$ when $T\in \tsrpmnq$. First, we need a lemma.

\begin{lemma}
\label{f0}
If $N$ is a positive integer and $f(X)\in {\EuScript P}(N;q)$ then $(-1)^N f(0)$ is a primitive element of $\Fq$.
\end{lemma}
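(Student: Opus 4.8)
The plan is to exploit the defining property of a primitive polynomial: if $f\in{\EuScript P}(N;q)$ then $f$ is monic and irreducible of degree $N$, and any root $\alpha$ generates the cyclic group $\F_{q^N}^*$, which has order $q^N-1$. First I would write $f$ as the product of the conjugates of $\alpha$,
$$
f(X)=\prod_{i=0}^{N-1}\left(X-\alpha^{q^i}\right),
$$
and evaluate at $X=0$. This gives $f(0)=(-1)^N\alpha^{1+q+\cdots+q^{N-1}}$, so that
$$
(-1)^N f(0)=\alpha^{(q^N-1)/(q-1)}.
$$

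Next I would observe that the right-hand side is exactly the norm $\mathrm{N}_{\F_{q^N}/\Fq}(\alpha)$, and in particular lies in $\Fq^*$; this membership is in any case clear since $f(0)\in\Fq$ because $f\in\Fq[X]$. It then remains to compute the multiplicative order of this element. Setting $k=(q^N-1)/(q-1)$, the order of $\alpha^{k}$ is $(q^N-1)/\gcd(q^N-1,k)$, and since $k$ divides $q^N-1$ we have $\gcd(q^N-1,k)=k$, whence the order equals $(q^N-1)/k=q-1$.

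Finally, since $\Fq^*$ is cyclic of order $q-1$ and $(-1)^N f(0)$ is an element of $\Fq^*$ of order exactly $q-1$, it must generate $\Fq^*$ and is therefore a primitive element of $\Fq$, as claimed. There is no real obstacle in this argument; the only point requiring a little care is the order computation, namely the observation that $\gcd(q^N-1,k)=k$ because $k\mid q^N-1$, which is precisely what forces the norm of the generator $\alpha$ to be a generator of $\Fq^*$ rather than a proper power.
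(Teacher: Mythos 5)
Your proof is correct. Each step checks out: writing $f(X)=\prod_{i=0}^{N-1}\bigl(X-\alpha^{q^i}\bigr)$ and evaluating at $0$ gives $(-1)^N f(0)=\alpha^{(q^N-1)/(q-1)}=\mathrm{N}_{\F_{q^N}/\Fq}(\alpha)$, and the order computation via $\gcd\bigl(q^N-1,(q^N-1)/(q-1)\bigr)=(q^N-1)/(q-1)$ correctly yields order exactly $q-1$, which forces the element to generate $\Fq^*$. The edge case $N=1$ is also handled correctly by your argument.

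The comparison with the paper is somewhat degenerate: the paper does not prove the lemma at all, but simply cites Theorem 3.18 of Lidl and Neiderreiter's book. What you have written is essentially the standard proof of that cited result, namely that the norm of a generator of $\F_{q^N}^*$ down to $\Fq$ is a generator of $\Fq^*$, because the norm map on a primitive element raises it to the power $(q^N-1)/(q-1)$, which is exactly the index needed to cut the order down to $q-1$ and no further. So your write-up buys self-containedness: a reader gets a complete argument in a few lines instead of a reference to chase, at the modest cost of repeating a classical fact. Nothing is missing.
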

\begin{proof}
See \cite[Thm. 3.18]{LN}.
\end{proof}
\begin{proposition}
\label{primblock}
If $T\in \tsrpmnq$ then 
$$(-1)^{m(n+1)}\phitm\left((-1)^{n+1}X\right)\in {\EuScript P}(m;q).$$
\end{proposition}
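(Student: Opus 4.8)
The plan is to exhibit a root of the monic degree-$m$ polynomial $g(X):=(-1)^{m(n+1)}\phitm\bigl((-1)^{n+1}X\bigr)$ that is a primitive element of $\Fqm$; since such an element generates $\Fqm$ over $\Fq$, the minimal polynomial of that root is forced to equal $g$, so that $g$ is monic irreducible of degree $m$ and primitive, i.e. $g\in{\EuScript P}(m;q)$. Note first that the scalar $(-1)^{m(n+1)}$ is chosen precisely to make $g$ monic, and that the roots of $g$ are $(-1)^{n+1}\lambda_1,\dots,(-1)^{n+1}\lambda_m$, where $\lambda_1,\dots,\lambda_m$ are the roots of $\phitm$. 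Since $T\in\tsrpmnq$, the polynomial $\phi_T$ is primitive, hence irreducible, so $\phitm$ is irreducible of degree $m$ (as observed after \eqref{phitphia}) and the $\lambda_i$ lie in $\Fqm$. Fix a root $\alpha\in\Fqmn$ of $\phi_T$; primitivity of $\phi_T$ means $\alpha$ generates the cyclic group $\Fqmn^*$.

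The key step is to factor $\phi_T$ over $\Fqm$. Since $T_{(m)}\in\M_m(\Fq)$ has eigenvalues $\lambda_1,\dots,\lambda_m$ in $\Fqm$, formula \eqref{charTSR} factors over $\Fqm$ as
\[
\phi_T(X)=\prod_{i=1}^m\bigl(X^n-\lambda_i\,g_T(X)\bigr),
\]
expressing $\phi_T$ as a product of $m$ polynomials of degree $n$. Because $\deg g_T\le n-1<mn=[\Fq(\alpha):\Fq]$ and $g_T(0)=1\ne0$, the value $g_T(\alpha)$ is nonzero, so $\alpha$ is a root of exactly one factor, say $X^n-\beta\,g_T(X)$ with $\beta:=\lambda_1\in\Fqm$. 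This factor is monic of degree $n=[\Fqmn:\Fqm]$ and vanishes at $\alpha$, so it is the minimal polynomial of $\alpha$ over $\Fqm$; its roots are therefore the $\Fqm$-conjugates $\alpha,\alpha^{q^m},\dots,\alpha^{q^{(n-1)m}}$.

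Then I would compute the product of these $n$ roots in two ways. Directly, it is $\prod_{j=0}^{n-1}\alpha^{q^{jm}}=N_{\Fqmn/\Fqm}(\alpha)$, the norm of $\alpha$ from $\Fqmn$ to $\Fqm$. Reading it instead off the constant term of the monic polynomial $X^n-\beta\,g_T(X)$ and using $g_T(0)=1$ gives $(-1)^{n+1}\beta$. Equating the two yields
\[
(-1)^{n+1}\beta=N_{\Fqmn/\Fqm}(\alpha),
\]
which is one of the roots $(-1)^{n+1}\lambda_i$ of $g$. Finally, $N_{\Fqmn/\Fqm}(\alpha)=\alpha^{(q^{mn}-1)/(q^m-1)}$ has multiplicative order exactly $q^m-1$, since $(q^{mn}-1)/(q^m-1)$ divides $q^{mn}-1$ and $\alpha$ has order $q^{mn}-1$. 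Hence $(-1)^{n+1}\beta$ is a primitive element of $\Fqm$, and the reduction in the first paragraph finishes the proof.

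The substantive point is the identification $(-1)^{n+1}\beta=N_{\Fqmn/\Fqm}(\alpha)$: rather than only checking that the norm down to $\Fq$ of a root of $g$ is primitive in $\Fq$ — a condition which for $m>1$ is necessary but not sufficient for primitivity of $g$ — this realizes a root of $g$ itself as the relative norm of the primitive element $\alpha$, whose order is transparent. I expect the main obstacle to be the sign bookkeeping: correctly combining the powers of $-1$ arising from $g_T(0)=1$, from the product-of-roots formula for a degree-$n$ polynomial, and from the normalizing factor $(-1)^{m(n+1)}$. The only other point requiring care is the verification that $g_T(\alpha)\ne0$, which is what makes the single factor $X^n-\beta\,g_T(X)$ well defined as the minimal polynomial of $\alpha$.
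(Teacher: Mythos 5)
Your proof is correct and follows essentially the same route as the paper's: both factor $\phi_T$ over $\Fqm$ as $\prod_{i}\left(X^n-\lambda_i g_T(X)\right)$ and extract the primitivity of $(-1)^{n+1}\lambda_i$ in $\Fqm$ from the constant term of the degree-$n$ factor, then reassemble this into the statement about $\phitm$. The only difference is one of packaging: where the paper observes that each factor is primitive in $\Fqm[X]$ and then cites Lemma \ref{f0}, you prove the needed instance of that lemma directly, by identifying $(-1)^{n+1}\beta$ with the relative norm $N_{\Fqmn/\Fqm}(\alpha)$ of a primitive root $\alpha$ of $\phi_T$ --- which is precisely the standard proof of Lemma \ref{f0}.
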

\begin{proof}
Let $\alpha_i (1 \leq i \leq m)$ be the roots of $\phitm$ (which is necessarily irreducible in $\Fq[X]$) in $\Fqm$. Then
$$
\phi_T(X)=\prod_{i=1}^{m}(X^n-\alpha_i g_T(X))
$$ 
is a factorization of $\phi_T(X)$ into irreducible polynomials in $\Fqm[X]$. Then, for each $i$, $X^n-\alpha_i g_T(X)$ is necessarily primitive in $\Fqm[X]$. By Lemma \ref{f0}, $(-1)^{n+1}\alpha_i$ is primitive in $\Fqm$ for each $i$. It is easily seen that the $m$ elements $(-1)^{n+1}\alpha_i$ are also conjugates of each other over $\Fq$. Equivalently, 
$$
\prod_{i=1}^{m}(X+(-1)^n\alpha_i) \in {\EuScript P}(m;q). 
$$
This is equivalent to the statement of the proposition.
\end{proof}

\begin{corollary}
 If $char(\Fq)=2$ and $\phi_T(X)$ is primitive, then so is $\phitm(X)$.
\end{corollary}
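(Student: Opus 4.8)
The plan is to read off the conclusion directly from Proposition \ref{primblock}, exploiting the fact that characteristic two collapses all the sign corrections. Since the hypothesis is that $\phi_T(X)$ is primitive, the matrix $T$ lies in $\tsrpmnq$, so Proposition \ref{primblock} applies and gives
$$
(-1)^{m(n+1)}\phitm\left((-1)^{n+1}X\right)\in {\EuScript P}(m;q).
$$
The only work, then, is to simplify the left-hand side under the assumption $\mathrm{char}(\Fq)=2$.

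The key observation I would make is that in a field of characteristic two one has $-1=1$, so every integer power of $-1$ equals $1$; in particular $(-1)^{m(n+1)}=1$ and $(-1)^{n+1}=1$. Substituting these into the expression above, the leading scalar factor disappears and the argument $(-1)^{n+1}X$ becomes simply $X$. Thus the membership statement from the proposition reduces to $\phitm(X)\in {\EuScript P}(m;q)$, which is exactly the assertion that $\phitm(X)$ is primitive. This completes the argument.

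Honestly, there is no real obstacle here: the corollary is an immediate specialization of Proposition \ref{primblock}, and the entire content lies in noticing that the sign twists $(-1)^{m(n+1)}$ and $(-1)^{n+1}$ trivialize in even characteristic. The only thing to be careful about is stating cleanly that primitivity of $\phi_T$ places $T$ in $\tsrpmnq$, so that the hypotheses of the proposition are genuinely met before it is invoked.
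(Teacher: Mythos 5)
Your proof is correct and is exactly the argument the paper intends: the corollary appears immediately after Proposition \ref{primblock} with no separate proof precisely because, in characteristic two, $-1=1$ in $\Fq$, so both sign factors $(-1)^{m(n+1)}$ and $(-1)^{n+1}$ collapse to $1$ and the proposition's conclusion becomes $\phitm(X)\in {\EuScript P}(m;q)$ verbatim. Your added care in noting that primitivity of $\phi_T$ places $T$ in $\tsrpmnq$ (so the proposition applies) is the right bookkeeping and matches the paper's setup.
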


\begin{corollary}
 If $n$ is odd and $\phi_T(X)$ is primitive, then so is $\phitm(X)$.
\end{corollary}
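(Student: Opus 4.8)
The plan is to obtain this as an immediate specialization of Proposition~\ref{primblock}, exploiting the parity of $n$. Since $\phi_T(X)$ is primitive, by definition $T\in\tsrpmnq$, so the proposition applies and yields
$$(-1)^{m(n+1)}\phitm\left((-1)^{n+1}X\right)\in \pmone.$$
The first and only real step I would take is to observe that $n$ odd makes $n+1$ even. Consequently $(-1)^{n+1}=1$, so the argument $(-1)^{n+1}X$ reduces to $X$, while $(-1)^{m(n+1)}=1$ as well, so the leading sign factor vanishes regardless of the value of $m$.

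With both sign factors trivialized, the displayed membership collapses to $\phitm(X)\in\pmone$, which is precisely the assertion that $\phitm(X)$ is primitive. That is exactly what the corollary demands, so nothing further would be needed.

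I do not anticipate any genuine obstacle here: the entire content of the corollary is already packaged inside Proposition~\ref{primblock}, and what remains is only to note that the sign twists appearing there become invisible once $n$ is odd. The same mechanism, incidentally, drives the preceding corollary in characteristic $2$, where $-1=1$ forces both sign factors to be trivial irrespective of the parity of $n$; so if anything, the only care worth exercising is to confirm that primitivity of $\phi_T(X)$ does place $T$ in $\tsrpmnq$, which is immediate from the definition of that set.
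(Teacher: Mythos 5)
Your proof is correct and is exactly the intended argument: the paper states this corollary without proof as an immediate consequence of Proposition~\ref{primblock}, and your observation that $n$ odd makes $(-1)^{n+1}=(-1)^{m(n+1)}=1$, collapsing the proposition's conclusion to $\phitm(X)\in\pmone$, is precisely that deduction. Nothing is missing.
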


% \begin{corollary}
% If $T\in \tsrpmnq$ then the number of possible values of $\phitm(X)$ is at most $\frac{\varphi(q^m-1)}{m}.$
% where $\varphi$ denotes the Euler Totient function. 
% \end{corollary}
% \begin{proof}
%  This follows easily since $ |{\EuScript P}(m;q)|=\frac{\varphi(q^m-1)}{m}.$
% \end{proof}

\section{Fibers of the Characteristic Map}
The maps $\Phi_I$ and $\Phi_P$ defined in \eqref{restrictions} are not surjective in general. To see this, let $T\in \tsr(2,2;2)$. We show that the primitive polynomial $X^4+X+1\in\F_2[X]$ cannot be the characteristic polynomial of $T$. Suppose, to the contrary, that 
$$
\phi_T(X)=X^4+X+1.
$$
Let $\phitm(X)=X^2+aX+b$. Then  
$$
X^4+aX^2g_T(X)+bg_T(X)^2=X^4+X+1.
$$
Formally differentiating with respect to $X$ on both sides, we obtain
$$
aX^2{g_T}'(X)=1
$$
which is impossible.

Since $\Phi_I$ is not surjective in general, the following natural question arises.

\begin{question}
\label{whatiscard}
Which polynomials $f(X)\in \Fq[X]$ are the characteristic polynomial of some $T\in \tsrimnq$ and what is the cardinality of the fiber $\Phi_{(m,n)}^{-1}(f(X))$.  
\end{question}

It follows easily from \eqref{phitphia} that $f(X)\in \Phi(\ptsrmnq)$ if and only if $f(X)$ can be expressed in the form 
\begin{equation}
\label{splits}
g(X)^m h\left(\frac{X^n}{g(X)}\right)
\end{equation}
for some monic polynomial $h(X)\in\Fq[X]$ of degree $m$ with $h(0)\neq0$ and a not necessarily monic $g(X)\in\Fq[X]$ of degree at most $n-1$ with $g(0)=1$. 

We say that a polynomial $f(X)\in\Fq[X]$ is \emph{$(m,n)$-decomposable} if it is the characteristic polynomial of some matrix in $\ptsrmnq$. We refer to \eqref{splits} as an $(m,n)$-decomposition of $f(X)$. We further say that $f(X)$ is \emph{uniquely} \emph{$(m,n)$-decomposable} if the representation of $f$ in the form \eqref{splits} is unique.  

The following theorem will be used to provide a partial answer to Question \ref{whatiscard}.
\begin{theorem}
\label{gerstenhaber}
Let $f(X)\in\Fq[X]$ be a monic polynomial of degree $n$ and let $f=f_1^{m_1} \cdots f_k^{m_k}$, where the $f_i$ are distinct irreducible polynomials in $\Fq[X]$ of degree $d_i$. The number of matrices in $\M_n(\Fq)$ that have $f(X)$ as their characteristic polynomial is given by 
\[ N_\chi(f(X))=q^{n^2-n}  \frac{F(q,n)}{\prod_{i=1}^{k}{F(q^{d_i},m_i)}}. \]
where \[ F(q,r)=\prod_{i=1}^{r}(1-q^{-i}) \] for every positive integer $r$.

\end{theorem}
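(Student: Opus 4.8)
The plan is to pass from matrices to modules and count via orbit--stabiliser. A matrix $M\in\M_n(\Fq)$ with $\phi_M=f$ is the same datum as an $\Fq[X]$-module structure on $\Fq^n$ (with $X$ acting as $M$) whose characteristic polynomial is $f$; two such matrices are conjugate under $\GL_n(\Fq)$ precisely when the associated modules are isomorphic, and the centraliser of $M$ in $\GL_n(\Fq)$ is exactly the automorphism group of the module $V_M$. Writing the set of matrices with characteristic polynomial $f$ as a union of conjugacy classes and applying the orbit--stabiliser theorem therefore gives
$$
N_\chi(f)=\sum_{V}\frac{|\GL_n(\Fq)|}{|\operatorname{Aut}_{\Fq[X]}(V)|},
$$
where the sum runs over isomorphism classes of $\Fq[X]$-modules $V$ of $\Fq$-dimension $n$ with characteristic polynomial $f$.

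First I would decompose $V$ into primary components. Since $f=f_1^{m_1}\cdots f_k^{m_k}$ with the $f_i$ distinct irreducible of degree $d_i$, each such $V$ splits canonically as $V=V_1\oplus\cdots\oplus V_k$, where $V_i$ is the $f_i$-primary part: a torsion module over the completion $A_i$ of $\Fq[X]$ at $(f_i)$, which is a complete discrete valuation ring with residue field $\F_{q^{d_i}}$. Here $V_i$ has length $m_i$, and by the structure theorem its isomorphism classes are indexed by partitions $\lambda\vdash m_i$, the type-$\lambda$ module being $\bigoplus_j A_i/(f_i^{\lambda_j})$. Because $\operatorname{Hom}_{\Fq[X]}(V_i,V_j)=0$ for $i\neq j$ (coprime annihilators), one has $\operatorname{Aut}(V)=\prod_i\operatorname{Aut}(V_i)$, so the sum factors completely:
$$
N_\chi(f)=|\GL_n(\Fq)|\prod_{i=1}^{k}\left(\sum_{\lambda\vdash m_i}\frac{1}{a_\lambda(q^{d_i})}\right),
$$
where $a_\lambda(Q)$ denotes the order of the automorphism group of the type-$\lambda$ module over a complete discrete valuation ring with residue field of size $Q$ (an order depending only on $Q$ and $\lambda$).

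The heart of the matter is then the identity $\sum_{\lambda\vdash m}a_\lambda(Q)^{-1}=Q^{-m}/F(Q,m)$. I would recognise its right-hand side through the same orbit--stabiliser count applied to nilpotent matrices: a nilpotent $M\in\M_m(\F_Q)$ is exactly a length-$m$ module supported at a single degree-one prime with residue field $\F_Q$, so the number of such matrices equals $|\GL_m(\F_Q)|\sum_{\lambda\vdash m}a_\lambda(Q)^{-1}$. By the theorem of Fine and Herstein this number is $Q^{m(m-1)}$, and since $|\GL_m(\F_Q)|=Q^{m^2}F(Q,m)$ the identity follows. Substituting it into the factorised expression, using $\prod_i q^{d_i m_i}=q^{\sum_i d_i m_i}=q^{n}$ and $|\GL_n(\Fq)|=q^{n^2}F(q,n)$, collapses the product to $q^{n^2-n}F(q,n)\big/\prod_i F(q^{d_i},m_i)$, which is the asserted formula.

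The main obstacle is this automorphism identity. Its validity over the discrete valuation ring $A_i$ rather than over $\F_{q^{d_i}}[[t]]$ rests on the routine but essential observation that $a_\lambda$ depends on $A_i$ only through the residue field size $q^{d_i}$, which reduces every factor to the split nilpotent count over $\F_{q^{d_i}}$. If one prefers a derivation independent of Fine--Herstein, the identity can instead be obtained from Hall's product formula for $a_\lambda(Q)$, which yields $\sum_{\lambda}u^{|\lambda|}/a_\lambda(Q)=\prod_{i\ge1}(1-uQ^{-i})^{-1}$; Euler's identity $\sum_{m\ge0}z^m/F(Q,m)=\prod_{i\ge0}(1-zQ^{-i})^{-1}$, evaluated at $z=u/Q$, rewrites the same product as $\sum_{m\ge0}Q^{-m}u^m/F(Q,m)$, and comparing coefficients of $u^m$ gives exactly $Q^{-m}/F(Q,m)$.
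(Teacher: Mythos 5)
Your proposal is correct, but the comparison with the paper is lopsided: the paper gives no proof of this theorem at all, simply citing Gerstenhaber and Reiner for what it treats as a classical auxiliary fact. You supply a genuine, essentially self-contained argument: orbit--stabiliser converts the count into $\sum_V |\GL_n(\Fq)|/|\operatorname{Aut}(V)|$ over isomorphism classes of $\Fq[X]$-module structures with characteristic polynomial $f$; primary decomposition (plus vanishing of Homs between components with coprime annihilators) factors this sum over the irreducible divisors $f_i$; and the local factor $\sum_{\lambda\vdash m_i} a_\lambda(q^{d_i})^{-1}$ is evaluated either by identifying it with the nilpotent-matrix count over $\F_{q^{d_i}}$ (Fine--Herstein), or independently via Hall's formula together with Euler's $q$-exponential identity, both giving $Q^{-m}/F(Q,m)$ with $Q=q^{d_i}$, $m=m_i$. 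The bookkeeping checks out: $|\GL_n(\Fq)|=q^{n^2}F(q,n)$ and $\sum_i d_im_i=n$ collapse the product to $q^{n^2-n}F(q,n)/\prod_i F(q^{d_i},m_i)$; and the one delicate point --- that the automorphism-group order of a type-$\lambda$ module over the completion of $\Fq[X]$ at $(f_i)$ depends only on the residue field size $q^{d_i}$ (indeed that completion is isomorphic to $\F_{q^{d_i}}[[t]]$, so each local factor literally \emph{is} a nilpotent count) --- is exactly the point you flag and resolve. In spirit your argument is close to Reiner's original module-theoretic proof, so the trade-off is: the paper buys brevity by outsourcing a known result, while your version makes the enumeration self-contained modulo Fine--Herstein, and entirely self-contained if one takes your alternative Hall/Euler route.
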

\begin{proof}
See \cite[\S 2]{Mg} or \cite[Thm. 2]{Ir}. 
\end{proof}

\begin{theorem}
\label{uniqsplit}
Suppose $f(X)$ is uniquely $(m,n)$-decomposable as
$$g(X)^m h\left(\frac{X^n}{g(X)}\right).$$ 
Then,
$$
|\Phi^{-1}_{(m,n)}(f(X))|=N_{\chi}(h(X)).
$$ 
\end{theorem}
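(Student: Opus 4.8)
The plan is to exploit the fact that a matrix $T\in\ptsrmnq$ is completely determined by the pair $(g_T(X),T_{(m)})$, together with the characteristic polynomial formula \eqref{phitphia}, and then to let the uniqueness hypothesis collapse the fiber down to a single matrix-counting problem. First I would observe that since $f$ is $(m,n)$-decomposable, its constant term is $f(0)=g(0)^m h(0)=h(0)\neq 0$; hence any $T\in\tsrmnq$ with $\phi_T(X)=f(X)$ automatically lies in $\ptsrmnq$ (recall that $\ptsrmnq$ is exactly the set of TSR matrices whose characteristic polynomial does not vanish at zero). So the entire fiber $\Phi^{-1}_{(m,n)}(f(X))$ sits inside $\ptsrmnq$, and I may work with the parametrization by pairs $(g_T,T_{(m)})$ throughout.

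Next I would take an arbitrary $T$ in the fiber and read \eqref{phitphia} as producing an $(m,n)$-decomposition of $f$. Setting $g(X):=g_T(X)$ and $h(X):=\phi_{T_{(m)}}(X)$, I would check that this pair satisfies the constraints of \eqref{splits}: the polynomial $g_T(X)=1+c_1X+\cdots+c_{n-1}X^{n-1}$ has degree at most $n-1$ and constant term $1$, while $\phi_{T_{(m)}}(X)$ is monic of degree $m$ with $\phi_{T_{(m)}}(0)=(-1)^m\det(T_{(m)})\neq 0$ because $T_{(m)}\in\GL_m(\Fq)$. Thus \eqref{phitphia} exhibits $f(X)=g_T(X)^m\,\phi_{T_{(m)}}\!\left(X^n/g_T(X)\right)$ as a bona fide decomposition in the sense of \eqref{splits}. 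Invoking the uniqueness assumption now forces $g_T(X)=g(X)$ and $\phi_{T_{(m)}}(X)=h(X)$ for every $T$ in the fiber.

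Consequently the block $g_T$ is pinned down, and the only remaining freedom is the choice of $T_{(m)}\in\GL_m(\Fq)$ subject to $\phi_{T_{(m)}}(X)=h(X)$. This gives a bijection between $\Phi^{-1}_{(m,n)}(f(X))$ and the set $\{A\in\GL_m(\Fq):\phi_A(X)=h(X)\}$. To finish, I would note that because $h(0)\neq 0$, any $A\in\M_m(\Fq)$ with characteristic polynomial $h$ has $\det A=(-1)^m h(0)\neq 0$ and is therefore already invertible; hence this set coincides with $\{A\in\M_m(\Fq):\phi_A(X)=h(X)\}$, whose cardinality is $N_\chi(h(X))$ by Theorem \ref{gerstenhaber}. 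The argument is essentially self-contained once the parametrization is in place; the only point requiring care, and the step I expect to be the crux, is the clean application of the uniqueness hypothesis to identify $(g_T,\phi_{T_{(m)}})$ with $(g,h)$, since this is what guarantees the fiber is not over- or under-counted. The passage from $\GL_m(\Fq)$ to $\M_m(\Fq)$ is a minor but necessary observation that makes Theorem \ref{gerstenhaber} directly applicable.
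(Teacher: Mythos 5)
Your proof is correct and follows essentially the same route as the paper: use the uniqueness of the $(m,n)$-decomposition to force $g_T(X)=g(X)$ and $\phi_{T_{(m)}}(X)=h(X)$ for every $T$ in the fiber, then count the matrices $T_{(m)}$ with characteristic polynomial $h(X)$ via Theorem \ref{gerstenhaber}. The only difference is that you spell out details the paper leaves implicit (that the fiber lies in $\ptsrmnq$ because $f(0)=h(0)\neq 0$, and that counting over $\GL_m(\Fq)$ agrees with counting over $\M_m(\Fq)$ since $h(0)\neq 0$), which is a faithful elaboration rather than a different argument.
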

\begin{proof}
Suppose $T\in \ptsrmnq$ and $\phi_T(X)=f(X)$. By the hypothesis, $g_T(X)$ and $\phitm(X)$ are uniquely determined and are equal to $g(X)$ and $h(X)$ respectively. Thus the number of such $T$ is equal to the number of possible values of $T_{(m)}$ with $\phitm(X)=h(X)$. This is the statement of the theorem.
\end{proof}

\begin{corollary}
\label{fiberofphit}
Suppose $T\in \ptsrmnq$ and $\phi_T(X)$ is uniquely $(m,n)$-decomposable. Then  
$$
|\Phi^{-1}_{(m,n)}(\phi_T(X))|=N_{\chi}(\phitm(X)).
$$
\end{corollary}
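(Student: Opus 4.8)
The plan is to deduce this immediately from Theorem \ref{uniqsplit} by specializing $f(X)$ to $\phi_T(X)$. First I would observe that since $T\in\ptsrmnq$, formula \eqref{phitphia} already exhibits an explicit $(m,n)$-decomposition of $\phi_T(X)$, namely
$$
\phi_T(X)=g_T(X)^m\,\phitm\!\left(\frac{X^n}{g_T(X)}\right),
$$
in which the roles of the polynomials $g$ and $h$ appearing in \eqref{splits} are played by $g_T(X)$ and $\phitm(X)$ respectively.

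Next I would verify that this pair meets the normalization conditions built into the definition of an $(m,n)$-decomposition, so that it genuinely competes with any other representation of $\phi_T(X)$ in the form \eqref{splits}. Since $g_T(X)=1+c_1X+\cdots+c_{n-1}X^{n-1}$, it has degree at most $n-1$ and satisfies $g_T(0)=1$. Since $\phitm(X)$ is the characteristic polynomial of the $m\times m$ block $T_{(m)}$, it is monic of degree $m$; and because $T\in\ptsrmnq=\tsrmnq\cap\GL_{mn}(\Fq)$ forces $T_{(m)}$ to be invertible, we obtain $\phitm(0)=(-1)^m\det T_{(m)}\neq0$. Hence the displayed expression is a legitimate $(m,n)$-decomposition of $\phi_T(X)$.

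Finally I would invoke the uniqueness hypothesis: because $\phi_T(X)$ is assumed to be uniquely $(m,n)$-decomposable, the decomposition just exhibited is the only one, so the polynomial $h$ occurring in Theorem \ref{uniqsplit} must coincide with $\phitm(X)$. Applying that theorem with $f(X)=\phi_T(X)$ then gives
$$
|\Phi^{-1}_{(m,n)}(\phi_T(X))|=N_\chi(\phitm(X)),
$$
which is exactly the assertion. I do not anticipate any substantive obstacle here, since the result is essentially a reinterpretation of Theorem \ref{uniqsplit}; the only point demanding care is confirming the normalization of $g_T(X)$ and $\phitm(X)$, as this is what guarantees that the canonical decomposition supplied by \eqref{phitphia} is precisely the one singled out by the uniqueness assumption, and therefore that $h=\phitm$.
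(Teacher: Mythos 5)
Your proposal is correct and takes essentially the same route as the paper, which states the corollary without further proof precisely because it is Theorem \ref{uniqsplit} specialized to $f(X)=\phi_T(X)$ with the canonical decomposition \eqref{phitphia} playing the role of the unique representation \eqref{splits}. Your explicit check that $g_T(X)$ and $\phitm(X)$ satisfy the normalization conditions (degree bounds, $g_T(0)=1$, $\phitm$ monic with $\phitm(0)\neq 0$ since $T_{(m)}\in \GL_m(\Fq)$) is a careful spelling-out of what the paper leaves implicit.
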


\begin{theorem}
\label{irrisdecomp}
Suppose $f(X)$ is $(m,n)$-decomposable and irreducible in $\Fq[X]$. Then $f(X)$ is uniquely $(m,n)$-decomposable. 
\end{theorem}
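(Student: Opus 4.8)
The plan is to reinterpret any $(m,n)$-decomposition of $f$ as a factorization of $f$ over $\Fqm$ and then exploit the essential uniqueness of factorization into irreducibles together with the normalization $g(0)=1$. Suppose $f=g(X)^m h(X^n/g(X))$ is an $(m,n)$-decomposition, and write $h(Y)=\prod_{i=1}^m(Y-\alpha_i)$ with $\alpha_1,\dots,\alpha_m$ its roots. I would first record the identity already implicit in the proof of Proposition \ref{primblock},
\[
f(X)=g(X)^m\prod_{i=1}^m\Bigl(\frac{X^n}{g(X)}-\alpha_i\Bigr)=\prod_{i=1}^m\bigl(X^n-\alpha_i g(X)\bigr),
\]
noting that each factor is monic of degree exactly $n$ because $\deg g\le n-1$, and that $\alpha_i\ne 0$ since $h(0)\ne 0$.

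The key structural input comes next. Since $f$ is irreducible over $\Fq$ of degree $mn$, it factors over $\Fqm$ into exactly $\gcd(mn,m)=m$ distinct monic irreducible factors, each of degree $mn/m=n$. Because $h$ is irreducible over $\Fq$ (as $f$ is, by the remark following \eqref{phitphia}), its roots $\alpha_i$ lie in $\Fqm$, so the displayed product is a genuine factorization of $f$ over $\Fqm$ into $m$ monic factors of degree $n$. A counting argument then forces each $X^n-\alpha_i g(X)$ to be irreducible over $\Fqm$: if any one of them split, $f$ would acquire strictly more than $m$ irreducible factors over $\Fqm$, contradicting the count above. Hence the polynomials $X^n-\alpha_i g(X)$ are precisely the $m$ irreducible factors of $f$ over $\Fqm$.

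I would then bring in a second decomposition $f=\tilde g(X)^m\tilde h(X^n/\tilde g(X))$ with $\tilde h(Y)=\prod_j(Y-\tilde\alpha_j)$ and run the identical analysis to obtain $f=\prod_j\bigl(X^n-\tilde\alpha_j\tilde g(X)\bigr)$ as the factorization of $f$ over $\Fqm$. By uniqueness of factorization into irreducibles, the two lists of factors agree as multisets, so there is a bijection $\sigma$ with $X^n-\alpha_i g(X)=X^n-\tilde\alpha_{\sigma(i)}\tilde g(X)$, i.e. $\alpha_i g(X)=\tilde\alpha_{\sigma(i)}\tilde g(X)$ for every $i$. Evaluating at $X=0$ and using $g(0)=\tilde g(0)=1$ gives $\alpha_i=\tilde\alpha_{\sigma(i)}$; since $\alpha_i\ne 0$, dividing through yields $g=\tilde g$. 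Consequently $\{\alpha_i\}=\{\tilde\alpha_j\}$ as multisets, whence $h=\tilde h$, and uniqueness follows.

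I expect the main obstacle to be the irreducibility-by-counting step: one must be sure that $\prod_i\bigl(X^n-\alpha_i g(X)\bigr)$ really is the canonical factorization of $f$ over $\Fqm$, which rests both on the standard fact that an irreducible of degree $mn$ splits over $\Fqm$ into exactly $m$ factors of degree $n$ and on the fact that the $\alpha_i$ lie in $\Fqm$. Once that is secured, the normalization $g(0)=1$ does the remaining work and the comparison at $X=0$ is entirely routine.
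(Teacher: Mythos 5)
Your proof is correct and takes essentially the same route as the paper's: both rewrite each decomposition as $\prod_i\bigl(X^n-\alpha_i g(X)\bigr)$ over $\Fqm$, invoke the fact that the irreducible $f$ of degree $mn$ splits into exactly $m$ distinct irreducible factors of degree $n$ over $\Fqm$ to match the factors of the two decompositions up to a permutation, and then compare constant terms using $g(0)=1$. The only difference is that you make explicit two points the paper leaves implicit --- the counting argument forcing each factor $X^n-\alpha_i g(X)$ to be irreducible, and the observation $\alpha_i\neq 0$ needed to cancel and conclude $g=\tilde g$ --- which is a harmless (indeed welcome) elaboration.
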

\begin{proof}
Let 
$$
f(X)=g_1(X)^m h_1\left(\frac{X^n}{g_1(X)}\right)=g_2(X)^m h_2\left(\frac{X^n}{g_2(X)}\right)
$$ 
be two $(m,n)$-decompositions of $f(X)$. Since $f$ is irreducible, so are $h_1$ and $h_2$. Let 
$$
h_1(X)=\prod_{i=1}^{m}\left(X-\lambda_i\right) \quad \mbox{ and } \quad h_2(X)=\prod_{i=1}^{m}\left(X-\mu_i\right)
$$
be the factorizations of $h_1$ and $h_2$ in $\Fqm[X]$. Then 
$$
\prod_{i=1}^{m}\left(X^n-\lambda_i g_1(X)\right) \quad \mbox{ and } \quad \prod_{i=1}^{m}\left(X^n-\mu_i g_2(X)\right)
$$
are two factorizations of $f(X)$ in $\Fqm[X]$. Since $f(X)$ is irreducible of degree $mn$, $f(X)$ splits uniquely into $m$ distinct irreducible factors of degree $n$ in $\Fqm[X]$. Thus each factor in both the above products is irreducible and the factors in one product are merely a rearrangement of those in the other. Thus there exists a permutation $\sigma\in \mathfrak{S}_m$ such that 
$$
X^n-\lambda_i g_1(X)=X^n-\mu_{\sigma(i)} g_2(X) \quad \mbox{ for } 1\leq i \leq m.
$$
Since $g_1(0)=g_2(0)$, it follows that $\lambda_i =\mu_{\sigma(i)}$ for $1\leq i \leq m$ and hence $g_1(X)=g_2(X)$.  Since the $\lambda_i$  are a permutation of the $\mu_j$ it follows that $h_1(X)=h_2(X)$ as well, proving uniqueness.
\end{proof}

\begin{theorem}
\label{fibersize}
If $T\in \tsrimnq$ then 
$$
\left|\Phi_{(m,n)}^{-1}\left(  \phi_T(X) \right)\right|=\frac{|\GL_m(\Fq)|}{q^m-1}.
$$
\end{theorem}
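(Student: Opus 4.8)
The plan is to chain together the decomposition results already established and then reduce the problem to a single evaluation of Gerstenhaber's counting formula. Since we assume $\max\{m,n\}>1$, we have $T\in\tsrimnq\subseteq\ptsrmnq$, so $f(X):=\phi_T(X)$ is $(m,n)$-decomposable and, by hypothesis, irreducible. First I would invoke Theorem \ref{irrisdecomp} to conclude that $f$ is \emph{uniquely} $(m,n)$-decomposable. Corollary \ref{fiberofphit} then applies verbatim and gives
$$
\left|\Phi_{(m,n)}^{-1}(f(X))\right| = N_\chi\left(\phitm(X)\right),
$$
so the entire problem collapses to computing $N_\chi(\phitm(X))$.

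The next step is to pin down $\phitm(X)$ precisely. Relation \eqref{phitphia} shows that irreducibility of $\phi_T$ forces $\phitm$ to be irreducible; since $T_{(m)}\in\M_m(\Fq)$, its characteristic polynomial $\phitm$ is a monic irreducible polynomial of degree exactly $m$. Thus Theorem \ref{gerstenhaber} applies with a single irreducible factor, namely $k=1$, $d_1=m$, and $m_1=1$. Substituting these values gives
$$
N_\chi\left(\phitm(X)\right) = q^{m^2-m}\,\frac{F(q,m)}{F(q^m,1)}.
$$

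It then remains to simplify this expression and match it against $|\GL_m(\Fq)|/(q^m-1)$. Writing $F(q,m)=\prod_{i=1}^m(1-q^{-i})$ and $F(q^m,1)=1-q^{-m}$, the ratio telescopes to $\prod_{i=1}^{m-1}(1-q^{-i})$, and clearing the negative powers of $q$ yields $N_\chi(\phitm(X))=q^{m(m-1)/2}\prod_{i=1}^{m-1}(q^i-1)$. On the other side, factoring $|\GL_m(\Fq)|=\prod_{i=0}^{m-1}(q^m-q^i)=q^{m(m-1)/2}\prod_{j=1}^m(q^j-1)$ and cancelling the $j=m$ factor $q^m-1$ gives exactly the same quantity, completing the proof.

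I expect the only delicate point to be the bookkeeping in this final simplification, namely keeping track of the power of $q$ extracted from the product and the index shift after cancelling $q^m-1$. As a sanity check that sidesteps the Gerstenhaber computation entirely, one may note that any $M\in\M_m(\Fq)$ with irreducible characteristic polynomial $\phitm$ necessarily has $\phitm$ as its minimal polynomial as well, hence is conjugate to the companion matrix $C_{\phitm}$; its centralizer in $\GL_m(\Fq)$ is $\Fq[C_{\phitm}]^{*}\cong\Fqm^{*}$, of order $q^m-1$, so the single conjugacy class that exhausts all such matrices, which is exactly the set counted by $N_\chi(\phitm)$, has size $|\GL_m(\Fq)|/(q^m-1)$, in agreement with the formula above.
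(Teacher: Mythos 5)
Your proposal is correct and follows essentially the same route as the paper: irreducibility plus Theorem \ref{irrisdecomp} gives unique $(m,n)$-decomposability, Corollary \ref{fiberofphit} reduces the fiber count to $N_\chi(\phitm(X))$, and Theorem \ref{gerstenhaber} with $k=1$, $d_1=m$, $m_1=1$ yields $|\GL_m(\Fq)|/(q^m-1)$. The paper simply omits the explicit simplification of Gerstenhaber's formula that you carry out, and your closing conjugacy-class argument is a valid independent check of the same count.
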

\begin{proof}
If $T$ is as above then $\phi_T(X)$ is irreducible and $(m,n)$-decomposable. By Theorem \ref{irrisdecomp} $\phi_T(X)$ is uniquely $(m,n)$-decomposable and Corollary \ref{fiberofphit} yields
$$
\left|\Phi_{(m,n)}^{-1}\left(  \phi_T(X) \right)\right|=N_{\chi}(\phitm(X)).
$$  
Since $\phitm(X)$ is also irreducible it follows from Theorem \ref{gerstenhaber} that
$$
N_{\chi}(\phitm(X))= \frac{|\GL_m(\Fq)|}{q^m-1}.
$$
\end{proof}

\section{TSRs with an Irreducible Characteristic Polynomial}
\label{irreducibletsr}
We now compute the number of irreducible TSRs in some simple cases.
\begin{theorem}
\begin{align*}
|\tsri(1,n;q)| & = |{\EuScript I}(n;q)| \\
|\tsri(m,1;q)| & = \frac{|\GL_m(\Fq)|}{q^m-1} |{\EuScript I}(m;q)|\\
|\tsrp(1,n;q)| & = |{\EuScript P}(n;q)| \\
|\tsrp(m,1;q)| & = \frac{|\GL_m(\Fq)|}{q^m-1} |{\EuScript P}(m;q)|
\end{align*}
\end{theorem}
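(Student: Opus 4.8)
The plan is to count each of the four sets as the number of polynomials in the image of the appropriate restriction of the characteristic map, multiplied by a common fiber size. Indeed, by Theorem \ref{fibersize} every fiber of $\Phi_{(m,n)}$ lying over an irreducible polynomial has the same cardinality $|\GL_m(\Fq)|/(q^m-1)$, since any $T\in\tsrmnq$ with irreducible $\phi_T$ automatically lies in $\tsrimnq$. A primitive polynomial is in particular irreducible, so every $T$ with primitive characteristic polynomial lies in $\tsrimnq$, and each element of $\Phi_{(m,n)}^{-1}(f)$ over a primitive $f$ has characteristic polynomial $f$; hence the fibers relevant to $\Phi_P$ over primitive polynomials again have cardinality $|\GL_m(\Fq)|/(q^m-1)$. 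Thus for all four quantities it suffices to identify the image of the relevant restriction of $\Phi$ and then multiply by this fiber size, which for $m=1$ equals $|\GL_1(\Fq)|/(q-1)=1$.

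First I would treat the case $m=1$, giving the first and third identities. Specializing the characterization \eqref{splits} of $(m,n)$-decomposable polynomials to $m=1$, a monic $f$ of degree $n$ is $(1,n)$-decomposable precisely when it can be written as $X^n-\lambda\,g(X)$ with $\lambda\neq 0$, $\deg g\le n-1$ and $g(0)=1$. Comparing constant terms shows $f(0)=-\lambda$, so such a representation exists if and only if $f(0)\neq 0$, in which case $\lambda$ and $g$ are recovered uniquely from $f$. Since the standing hypothesis $\max\{1,n\}>1$ forces $n\ge 2$, every irreducible, and hence every primitive, polynomial of degree $n$ has nonzero constant term. Therefore the image of $\Phi_I$ is all of $\I{n}{q}$ and the image of $\Phi_P$ is all of ${\EuScript P}(n;q)$, and multiplying by fiber size $1$ yields $|\tsri(1,n;q)|=|\I{n}{q}|$ and $|\tsrp(1,n;q)|=|{\EuScript P}(n;q)|$.

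Next I would treat $n=1$, giving the second and fourth identities. When $n=1$ the defining form \eqref{tsr} collapses to the single $m\times m$ block $c_0B$, and since $c_0B$ ranges over all of $\M_m(\Fq)$ as $(c_0,B)$ varies, we have $\tsr(m,1;q)=\M_m(\Fq)$; moreover $g_T(X)=1$, so \eqref{phitphia} gives $\phi_T(X)=\phitm(X)$ with $T_{(m)}=T$. Hence $\Phi$ restricted to $\tsr(m,1;q)$ is simply the characteristic map on $\M_m(\Fq)$, which is surjective onto all monic polynomials of degree $m$ (for instance via companion matrices). So the image of $\Phi_I$ is $\I{m}{q}$ and that of $\Phi_P$ is ${\EuScript P}(m;q)$; multiplying by the fiber size $|\GL_m(\Fq)|/(q^m-1)$ gives the two stated formulas. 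Equivalently, one may sum $N_\chi(h)$ over $h\in\I{m}{q}$ using Theorem \ref{gerstenhaber} and verify directly that $N_\chi(h)=|\GL_m(\Fq)|/(q^m-1)$ for an irreducible $h$ of degree $m$.

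The main obstacle is establishing surjectivity of $\Phi_I$ and $\Phi_P$ onto the full sets of irreducible and primitive polynomials in these two boundary regimes; once this is in hand, the counts follow at once from the uniform fiber size. For $n=1$ surjectivity is transparent because $\tsr(m,1;q)$ is all of $\M_m(\Fq)$, whereas for $m=1$ it rests on the explicit decomposition $f(X)=X^n-\lambda g(X)$, whose sole constraint $f(0)\neq 0$ is automatically satisfied by every irreducible or primitive polynomial of degree $n\ge 2$.
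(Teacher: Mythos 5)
Your proposal is correct and follows essentially the same route as the paper: the paper's proof likewise observes that $\Phi_I$ and $\Phi_P$ are surjective when $m=1$ or $n=1$ and then multiplies by the uniform fiber size from Theorem \ref{fibersize}. You have simply supplied the details the paper leaves as ``easily seen'' (the constant-term argument for $m=1$ and the identification $\tsr(m,1;q)=\M_m(\Fq)$ for $n=1$), and these details are accurate.
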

\begin{proof}
If either $m$ or $n$ equals 1, it is easily seen that the maps $\Phi_I$ and $\Phi_P$ are surjective. The above formulae follow easily from Theorem \ref{fibersize}. 
\end{proof}

Let $\sqmn$ denote the set of irreducible polynomials $f(X)\in\Fqm[X]$ of the form 
$$
X^n-\lambda g(X) 
$$ 
where $\lambda$ satisfies $\Fqm=\Fq(\lambda)$ and $g(X)\in\Fq[X]$ with $g(0)=1$ and $\deg g(X)\leq n-1$. The significance of $\sqmn$ is apparent from the following theorem.
\begin{theorem}
\label{tsrisqmn}
For positive integers $m,n$
$$
|\tsrimnq|=\frac{|\sqmn|}{m} \frac{|\GL_m(\Fq)|}{q^m-1}.
$$
\end{theorem}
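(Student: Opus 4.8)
The plan is to combine the constant fiber size from Theorem~\ref{fibersize} with an $m$-to-$1$ correspondence between $\sqmn$ and the image of $\Phi_I$. First I would observe that, since every fiber of $\Phi_I\colon \tsrimnq \to \imn$ has the same cardinality $|\GL_m(\Fq)|/(q^m-1)$ (the fiber of an irreducible $\phi_T$ inside $\tsrmnq$ lies entirely in $\tsrimnq$), the count factors as
$$
|\tsrimnq| = \left|\Phi_I(\tsrimnq)\right| \cdot \frac{|\GL_m(\Fq)|}{q^m-1},
$$
where $P := \Phi_I(\tsrimnq)$ is precisely the set of irreducible polynomials in $\Fq[X]$ that are $(m,n)$-decomposable (using $\tsrimnq \subseteq \ptsrmnq$). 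It therefore suffices to prove $|P| = |\sqmn|/m$.

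To relate $P$ and $\sqmn$, I would define a map $\Psi\colon \sqmn \to \Fq[X]$ sending an element to the product of its Galois conjugates over $\Fq$,
$$
\Psi\bigl(X^n - \lambda g(X)\bigr) = \prod_{j=0}^{m-1}\bigl(X^n - \lambda^{q^j} g(X)\bigr).
$$
Because $g(X)\in\Fq[X]$ is fixed by Frobenius, this product is Galois-invariant and hence lies in $\Fq[X]$. Since $\Fqm=\Fq(\lambda)$, the $m$ conjugates $\lambda^{q^j}$ are distinct, the product has degree $mn$, and $X^n-\lambda g(X)$ is irreducible over $\Fqm$. A root $\beta$ then satisfies $\beta^n=\lambda g(\beta)$ with $g(\beta)\neq 0$ (else $\beta=0$ and $g(0)=1$), so $\lambda=\beta^n/g(\beta)\in\Fq(\beta)$; thus $\Fqm\subseteq\Fq(\beta)$ and $[\Fq(\beta):\Fq]=mn$, forcing $\Psi(X^n-\lambda g(X))$ to be irreducible over $\Fq$. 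As it is manifestly $(m,n)$-decomposable, $\Psi$ maps into $P$.

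Next I would show $\Psi$ is surjective and exactly $m$-to-$1$. For surjectivity, given $f\in P$, Theorem~\ref{irrisdecomp} furnishes a unique decomposition $g(X)^m h(X^n/g(X))$ with $h$ irreducible of degree $m$; factoring $h(X)=\prod_{i=1}^m(X-\lambda_i)$ over $\Fqm$ exhibits $f=\prod_{i=1}^m(X^n-\lambda_i g(X))$ as a product of irreducible factors (exactly as in the proof of Theorem~\ref{irrisdecomp}), each lying in $\sqmn$ and mapping to $f$ under $\Psi$. For the fiber count, unique factorization in $\Fqm[X]$ shows that any $X^n-\mu h_0(X)\in\sqmn$ with $\Psi(X^n-\mu h_0(X))=f$ must coincide with one of these $m$ factors: the identity $\mu h_0(X)=\lambda_i g(X)$ combined with the normalization $h_0(0)=g(0)=1$ yields $\mu=\lambda_i$ and hence $h_0=g$. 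Thus each fiber of $\Psi$ has exactly $m$ elements, giving $|\sqmn|=m|P|$ and the claimed formula.

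I expect the delicate point to be the fiber analysis: verifying that no element of $\sqmn$ outside the $m$ conjugate factors can map to a given $f$. This hinges jointly on the uniqueness statement of Theorem~\ref{irrisdecomp} and on the normalizing condition $g(0)=1$ built into the definition of $\sqmn$, which together pin down both $\mu$ and $h_0$ rather than merely the product $\mu h_0$.
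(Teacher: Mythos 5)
Your proposal is correct and follows essentially the same route as the paper: it factors the count via the constant fiber size from Theorem~\ref{fibersize}, then establishes an exactly $m$-to-$1$ correspondence between $\sqmn$ and the irreducible $(m,n)$-decomposable polynomials using the same Galois-conjugate product map (your $\Psi$ is the paper's $\Gamma$), with surjectivity coming from Theorem~\ref{irrisdecomp}. Your fiber analysis via unique factorization and the normalization $g(0)=1$ just spells out what the paper leaves as ``easy to see.''
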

\begin{proof}
Define $$\dmnq:=\Phi_I\left(\tsrimnq\right).$$ By Theorem \ref{fibersize},
\begin{equation}
\label{deltasize}
|\tsrimnq|= |\dmnq|\frac{|\GL_m(\Fq)|}{q^m-1}.
\end{equation}
Define a map 
$$\Gamma : \sqmn \to \Fqmn[X]$$ 
by
$$\Gamma(\left(X^n-\lambda g(X)\right) := \prod_{i=0}^{m-1}{\left(X^n-\lambda^{q^i}g(X)\right)}.$$
It is easy to see that the product on the right is $(m,n)$-decomposable. Let $\beta$ be a root of $X^n-\lambda g(X)$ in some extension field of $\Fqm$. Then, the minimal  polynomial of $\beta$ over $\Fq$ is clearly $\Gamma(X^n-\lambda g(X))$. Thus $\Gamma(X^n-\lambda g(X))$ is irreducible in $\Fq[X]$.  Since $\dmnq$ is precisely the set of irreducible $(m,n)$-decomposable polynomials in $\Fq[X]$, it follows that 
$
\Gamma\left(\sqmn\right)\subseteq \dmnq.
$
We claim that
$$
\Gamma\left(\sqmn\right)= \dmnq.
$$
To see this, let $f(X)\in \dmnq$. Since $f$ is irreducible, $f$ has a unique $(m,n)$-decomposition, say
\begin{equation*}
\label{decomp}
f(X)=g(X)^m h\left(\frac{X^n}{g(X)}\right).
\end{equation*}
Then $h(X)$ is necessarily irreducible in $\Fq[X]$ and if $\mu$ is a root of $h(X)$ in $\Fqm$, then 
$$
\Gamma(X^n-\mu g(X))=f(X),
$$ 
proving the claim. It is now easy to see that $\Gamma^{-1}(f(X))$ is precisely the set $\{X^n-\mu^{q^i}g(X):0 \leq i \leq m-1 \}$. Thus $|\Gamma^{-1}(f)|=m$ for each $f\in \dmnq$ and consequently 
$$
|\dmnq|=\frac{|\sqmn|}{m} .
$$ 
The theorem now follows from \eqref{deltasize}. 
\end{proof}

\section{Irreducible TSRs of order two}

In this section we outline a connection between TSRs of order two ($n=2$) and self-reciprocal polynomials and give a new proof of a theorem of Carlitz \cite{Car} (which has been reproved by Ahmadi~\cite{Ahmadi}, Cohen \cite{Sc}, Meyn \cite{Meyn}, Meyn and G{\"o}tz \cite{HmWg} and Miller \cite{Miller}) on the number of self-reciprocal irreducible monic polynomials of a given degree over a finite field. In what follows, we denote the cardinality of $\sqmn$ (as defined in Section \ref{irreducibletsr}) by $\nqmn$. We now consider the computation of $|\tsri(m,2;q)|$ for $m>1$. By Theorem \ref{tsrisqmn}, it suffices to compute $N_q(m,2)$ which is given by
\begin{align*}
N_q(m,2) &=\left|\left\{X^2-\lambda (aX+1) \in \I{2}{q^m}: \Fqm=\Fq(\lambda),a\in \Fq \right\}\right| \\
 &=\left|\left\{X^2+aX-\alpha \in \I{2}{q^m}: \Fqm=\Fq(\alpha),a\in \Fq \right\}\right|.
\end{align*}
For every positive integer $t>1$ and $a\in\Fq$, define
\begin{equation}
\label{vta}
V_t(a)=\left\{\alpha\in \F_{q^t}:\F_{q^t}=\Fq(\alpha),X^2+aX-\alpha \mbox{ is irreducible in } \F_{q^t}[X]\right\}.
\end{equation}
Then it follows that
$$
N_q(m,2)=\sum_{a\in\Fq}|V_m(a)|.
$$
\begin{proposition}
\label{sdempty}
For $m>1$ and $a\in \Fq$, $V_m(a)=\emptyset$ if and only if $q$ is even and $a=0$. 
\end{proposition}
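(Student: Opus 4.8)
The plan is to prove the biconditional by handling its two directions separately. The direction that is essentially free is the implication ``$q$ even and $a=0$ $\Rightarrow V_m(a)=\emptyset$'': in characteristic two the Frobenius $x\mapsto x^2$ is a bijection of $\Fqm$, so every $\alpha\in\Fqm$ is a square, say $\alpha=\gamma^2$, whence $X^2+aX-\alpha=X^2+\alpha=(X+\gamma)^2$ is reducible. Thus no $\alpha$ can make $X^2-\alpha$ irreducible and $V_m(0)=\emptyset$. The real content is the converse: whenever $q$ is odd, or $q$ is even with $a\neq0$, I must exhibit some $\alpha\in V_m(a)$.

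My strategy for the converse is to pass to a root of the quadratic. A polynomial $X^2+aX-\alpha$ with $\alpha\in\Fqm$ is irreducible over $\Fqm$ precisely when its roots lie in $\mathbb{F}_{q^{2m}}\setminus\Fqm$; writing $\beta$ for such a root, the two roots are $\beta$ and $\beta^{q^m}$, so that $\operatorname{Tr}_{\mathbb{F}_{q^{2m}}/\Fqm}(\beta)=\beta+\beta^{q^m}=-a$ and $N_{\mathbb{F}_{q^{2m}}/\Fqm}(\beta)=\beta^{\,q^m+1}=-\alpha$. This suggests the reduction that I would isolate as the crux: it suffices to produce a generator $\beta$ of $\mathbb{F}_{q^{2m}}$ over $\Fq$ whose relative trace to $\Fqm$ equals $-a$. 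Given such a $\beta$, set $\alpha:=-N_{\mathbb{F}_{q^{2m}}/\Fqm}(\beta)$; then $X^2+aX-\alpha$ is exactly the minimal polynomial of $\beta$ over $\Fqm$ (degree two, as $\beta\notin\Fqm$), hence irreducible over $\Fqm$. Moreover $\beta$ satisfies this quadratic over $\Fq(\alpha)$, so
\[
2m=[\Fq(\beta):\Fq]\le 2\,[\Fq(\alpha):\Fq]\le 2m,
\]
forcing $[\Fq(\alpha):\Fq]=m$, i.e.\ $\Fq(\alpha)=\Fqm$. Thus $\alpha\in V_m(a)$.

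It then remains only to find such a $\beta$, and here I would count inside the relative trace fiber $S=\{\beta\in\mathbb{F}_{q^{2m}}:\operatorname{Tr}_{\mathbb{F}_{q^{2m}}/\Fqm}(\beta)=-a\}$. Since the relative trace is a surjective $\Fqm$-linear map, $|S|=q^m$, and I want to show $S$ is not entirely composed of elements lying in proper subfields of $\mathbb{F}_{q^{2m}}$. The key observation keeping the dominant subfield under control is that on $\Fqm$ the relative trace is multiplication by $2$: hence $S\cap\Fqm$ is the single point $-a/2$ when $q$ is odd, and is empty when $q$ is even and $a\neq0$, so $|S\cap\Fqm|\le1$ in both relevant cases. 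Any other proper subfield $\mathbb{F}_{q^e}$ meeting $S$ has $e\mid2m$, $e\neq m$, $e\neq 2m$, and therefore $2m/e\ge3$, i.e.\ $e\le 2m/3$; summing the sizes of these fields gives at most $q+q^2+\cdots+q^{m-1}$. Adding the at most one point from $\Fqm$, the number of non-generators in $S$ is at most $(q^m-1)/(q-1)<q^m=|S|$, so $S$ must contain a generator $\beta$, which completes the argument.

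I expect the counting step to be the main obstacle, and specifically the smallest cases: the crude inequality $(q^m-1)/(q-1)<q^m$ leaves almost no room when $q=2$, where the fiber has only $q^m$ elements, and it succeeds only because the large subfield $\Fqm$ --- which by itself would swamp the count --- contributes at most one point to $S$. Making the ``relative trace equals multiplication by $2$'' observation and the divisor bound $e\le 2m/3$ precise is thus the delicate part; everything else is bookkeeping.
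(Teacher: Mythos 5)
Your proof is correct, and it takes a genuinely different route from the paper's. The paper never leaves $\Fqm$: it restricts the map $h(x)=x^2+ax$ to the set $Z_m=\{\alpha\in\Fqm:\Fqm=\Fq(\alpha)\}$ of generators, notes that $h$ pairs $\beta$ with $-a-\beta$ (distinct points exactly when $q$ is odd or $a\neq 0$), so that $|h(Z_m)|=|Z_m|/2<|Z_m|$, and picks $\alpha\in Z_m\setminus h(Z_m)$; such an $\alpha$ works because any root in $\Fqm$ of $X^2+aX-\alpha$ would itself generate $\Fqm$ and hence lie in $Z_m$. You instead go up to the quadratic extension $\F_{q^{2m}}$: you count the fiber $S$ of the relative trace over $-a$ (of size $q^m$), bound the non-generators of $\F_{q^{2m}}$ lying in $S$ by $1+q+\cdots+q^{m-1}<q^m$ --- the key saving being, as you note, that $\Fqm$ itself meets $S$ in at most one point since the relative trace is multiplication by $2$ there --- and then descend via $\alpha=-N_{\F_{q^{2m}}/\Fqm}(\beta)$, checking $\Fq(\alpha)=\Fqm$ by a degree argument. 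Both are valid counting arguments; yours is the standard ``generator with prescribed trace'' technique and is entirely self-contained. What the paper's version buys is economy and reuse: the 2-to-1 structure of $h$ on generators is precisely the engine of the recurrences $x_t=2u_t$, $y_{2t}=2v_t$ that drive the enumeration in Theorem \ref{nqm2}, so the paper gets Proposition \ref{sdempty} essentially for free from machinery it needs anyway, and it also yields the sharper lower bound $|V_m(a)|\geq |Z_m|/2$, whereas your count gives roughly half of $q^m-(q^m-1)/(q-1)$. One small remark: your worry about tightness at $q=2$ needs no special handling --- once $|S\cap\Fqm|\leq 1$ is established, the margin $q^m-(q^m-1)/(q-1)>0$ holds uniformly for all $q\geq 2$ and $m>1$, so no delicate case analysis is required.
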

\begin{proof}
Define
$$
Z_m:=\left\{\alpha\in \Fqm:\Fqm=\Fq(\alpha)\right\}.
$$ 
If $q$ is even and $a=0$ then 
\begin{align*}
V_m(a)  & = \left\{\alpha\in \Fqm:\Fqm=\Fq(\alpha),X^2-\alpha \mbox{ is irreducible in } \Fqm[X]\right\}\\
    & =  \emptyset.
\end{align*}
since every element in $\Fqm$ is a square. Now suppose either $q$ is odd or $a \neq 0$. %  If the polynomial $X^2+aX-\alpha$ is reducible for some $\alpha \in Z_m$, then it has two roots, say $\beta$ and $-a-\beta$ (which are distinct by the assumptions on $q,a$ and $m$), that necessarily lie in $Z_m$. Thus 
% $$
% Z_m \nsubseteq \left\{x^2+ax:x \in Z_m\right\}.
% $$
% Now observe that if $x^2+ax\in Z_m$ for some $x\in \Fqm$ then $x\in Z_m$. Thus 
% $$
% Z_m \nsubseteq \left\{x^2+ax:x \in \Fqm \right\}
% $$
% which implies that $V_m(a)$ is nonempty.  
Consider the map $h:Z_m \to \Fqm$ given by $h(x)=x^2+ax$. For each $\beta \in Z_m$, we have $-a-\beta \in Z_m$ and $h(\beta)=h(-a-\beta)$. Further, $\beta$ and $-a-\beta$ are distinct by the assumptions on $q$, $a$ and $m$. It follows that the range of $h$ is of cardinality $|Z_m|/2$ and thus there exists some $\alpha \in Z_m$ which is not in the range of $h$. Then $X^2+aX-\alpha$ is irreducible in $\Fqm[X]$, implying that $V_m(a)$ is nonempty.
\end{proof}

We will use the above proposition implicitly in the proof of the next theorem which is the main theorem of this paper.

\begin{theorem}
\label{nqm2}
Suppose $m>1$ and $m=2^k l$ where $k,l$ are nonnegative integers with $l$ odd.
\begin{enumerate}
 \item 
If $l=1$, then 
\begin{equation*}
N_q(m,2)=
\begin{dcases}
 \frac{(q-1)q^m}{2} & q \mbox{ even}, \\
\frac{q(q^m-1)}{2}       & q \mbox{ odd}.
\end{dcases}
\end{equation*}
\item If $l>1$, then
\begin{equation*}
N_q(m,2)=
\begin{dcases}
 \frac{l|{\EuScript I}(l;q^{2^k})|}{2} (q-1) & q \mbox{ even}, \\
\frac{l|{\EuScript I}(l;q^{2^k})|}{2} q       & q \mbox{ odd}.
\end{dcases}
\end{equation*}
\end{enumerate}
\end{theorem}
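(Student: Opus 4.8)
The plan is to compute $N_q(m,2)=\sum_{a\in\Fq}|V_m(a)|$ by first turning the irreducibility condition defining $V_m(a)$ into an explicit arithmetic condition on $\alpha$, and then setting up a recurrence over the subfields of $\Fqm$ that I resolve by M\"obius inversion. The argument splits according to the parity of $q$. When $q$ is odd, $X^2+aX-\alpha$ is irreducible over $\Fqm$ precisely when its discriminant $a^2+4\alpha$ is a nonzero nonsquare in $\Fqm$; when $q$ is even and $a\neq 0$, the substitution $X=aY$ reduces the polynomial to $Y^2+Y+\alpha/a^2$, so irreducibility is equivalent to $\operatorname{Tr}_{\Fqm/\F_2}(\alpha/a^2)=1$, while $V_m(0)=\emptyset$ by Proposition \ref{sdempty}. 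In either case the map $\alpha\mapsto a^2+4\alpha$ (respectively $\alpha\mapsto\alpha/a^2$) is a bijection of $\Fqm$ that preserves the set $Z_m=\{\alpha\in\Fqm:\Fqm=\Fq(\alpha)\}$, so $|V_m(a)|$ does not depend on $a$ for $q$ odd, nor on the nonzero value of $a$ for $q$ even. Consequently $N_q(m,2)=q\,v_m$ for $q$ odd and $N_q(m,2)=(q-1)\,w_m$ for $q$ even, where $v_m$ is the number of nonsquares in $Z_m$ and $w_m$ the number of elements of $Z_m$ of absolute trace $1$.

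Next I would drop the generation condition and count over all of $\Fqm$: the number of $\alpha\in\Fqm$ making the quadratic irreducible over $\Fqm$ is $(q^m-1)/2$ for $q$ odd and $q^m/2$ for $q$ even. Partitioning $\Fqm=\bigsqcup_{d\mid m}Z_d$ and comparing the status of a degree-$d$ element over $\Fqm$ with its status over its field of definition $\F_{q^d}$ produces the recurrence. The crucial point is a parity dichotomy: for $\beta\in\F_{q^d}^\ast$, $\beta$ is a square in $\Fqm$ iff it is a square in $\F_{q^d}$ or $m/d$ is even, since a generator of $\F_{q^d}^\ast$ equals $g^{(q^m-1)/(q^d-1)}$ for a generator $g$ of $\Fqm^\ast$ and the exponent satisfies $(q^m-1)/(q^d-1)\equiv m/d\pmod 2$; dually, for $q$ even, $\operatorname{Tr}_{\Fqm/\F_2}$ restricted to $\F_{q^d}$ equals $\operatorname{Tr}_{\F_{q^d}/\F_2}$ when $m/d$ is odd and vanishes when $m/d$ is even, because $\operatorname{Tr}_{\Fqm/\F_{q^d}}$ acts as multiplication by $m/d$ on $\F_{q^d}$. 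Thus a degree-$d$ element contributes to the global count exactly when $m/d$ is odd, and then with the same multiplicity as over $\F_{q^d}$. Writing $m=2^kl$ with $l$ odd, the divisors $d$ with $m/d$ odd are precisely $d=2^kd'$ with $d'\mid l$, so I obtain, for $q$ odd,
\[
\frac{q^{m}-1}{2}=\sum_{d'\mid l}v_{2^kd'},
\]
and the companion identity $q^{m}/2=\sum_{d'\mid l}w_{2^kd'}$ for $q$ even. Since these hold with $l$ replaced by any of its divisors, M\"obius inversion over the divisor lattice of $l$ gives closed forms for $v_m$ and $w_m$.

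Finally I would substitute and simplify. Inversion yields $v_{2^kl}=\tfrac12\sum_{d'\mid l}\mu(l/d')(q^{2^kd'}-1)$ and $w_{2^kl}=\tfrac12\sum_{d'\mid l}\mu(l/d')q^{2^kd'}$. Using $\sum_{d'\mid l}\mu(l/d')=0$ for $l>1$ together with $\sum_{d'\mid l}\mu(l/d')(q^{2^k})^{d'}=l\,|{\EuScript I}(l;q^{2^k})|$, the constant term disappears when $l>1$ and survives (as $q^{2^k}-1=q^m-1$) when $l=1$. Multiplying by $q$ in the odd case and by $q-1$ in the even case then reproduces all four expressions in the statement.

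I expect the main obstacle to be the parity dichotomy in the second paragraph: the precise assertion that a degree-$d$ element retains its quadratic (respectively trace) character in passing from $\F_{q^d}$ to $\Fqm$ exactly when $m/d$ is odd. Everything preceding it is a direct translation of irreducibility, and everything following it is routine M\"obius bookkeeping; but this is the step that makes the subfield recurrence close up, and it is where the factorization $m=2^kl$ enters, forcing the summation index to range only over $d'\mid l$.
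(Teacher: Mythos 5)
Your proposal is correct, and it reaches all four formulas by a genuinely different route than the paper. You convert irreducibility of $X^2+aX-\alpha$ over $\Fqm$ into classical criteria -- nonsquare discriminant $a^2+4\alpha$ for odd $q$, and $\operatorname{Tr}_{\Fqm/\F_2}(\alpha/a^2)=1$ for even $q$ via Artin--Schreier -- and then close the count with the subfield parity lemma (quadratic character, resp.\ absolute trace, of an element of $\F_{q^d}$ passes faithfully to $\Fqm$ exactly when $m/d$ is odd, and trivializes when $m/d$ is even), followed by M\"obius inversion over the divisors of $l$; I checked the dichotomy itself, the intrinsic nature of $v_d$ and $w_d$ that makes the inversion legitimate, and the final simplification using $\sum_{d'\mid l}\mu(l/d')=0$ for $l>1$, and all are sound. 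The paper instead never invokes any characteristic-dependent irreducibility criterion: it studies the two-to-one map $h(x)=x^2+ax$ on the sets $X_t(a),Y_t(a),U_t(a),V_t(a)$, getting $x_t=2u_t$, $y_{2t}=2v_t$, hence the recurrence $y_m=z_{m/2}+y_{m/2}$ for even $m$ with $y_t=0$ for odd $t\geq 3$, which it telescopes; the parity of $q$ enters only in the base case $y_2$ and in counting admissible $a$. What your approach buys is conceptual transparency and uniformity: the $l=1$ and $l>1$ cases fall out of a single inversion, and the method plainly generalizes to counting degree-$d$ elements with any prescribed character or trace value. What the paper's approach buys is self-containedness: the same elementary double-counting handles both parities of $q$ at once, with no appeal to discriminants, Artin--Schreier theory, or the structure of $\Fqm^\ast$ as a cyclic group. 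Your parity dichotomy is precisely the structural counterpart of the paper's facts that $y_t$ vanishes for odd $t\geq 3$ and that $h$ maps $Y_{2t}(a)$ two-to-one onto $V_t(a)$; as you anticipated, that is indeed the step carrying the $2$-adic structure of $m$.
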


\begin{proof}
For each positive integer $t>1$, let 
$$
Z_t=\left\{\alpha\in \Fqt:\Fqt=\Fq(\alpha)\right\}
$$ 
as in Proposition \ref{sdempty}. Let $a\in\Fq$ and assume that $a\neq 0$ whenever $q$ is even. Define for each positive integer $t>1$ the sets
\begin{align*}
X_t(a) &=\left\{\alpha\in \F_{q^t}:\F_{q^t}=\Fq(\alpha^2+a\alpha)\right\},\\
Y_t(a) &=\left\{\alpha\in \F_{q^t}:\F_{q^t}=\Fq(\alpha)\neq\Fq(\alpha^2+a\alpha)\right\},\\
U_t(a) &=\left\{\alpha^2+a\alpha:\alpha \in \F_{q^t},\F_{q^t}=\Fq(\alpha^2+a\alpha)\right\}. 
\end{align*}
If $V_t(a)$ is as in \eqref{vta}, then it is easy to see that 
\begin{equation}
\label{zisunion}
Z_t=X_t(a)\sqcup Y_t(a)=U_t(a)\sqcup V_t(a).
\end{equation}
Denote the cardinalities of $Z_t,X_t(a),Y_t(a),U_t(a),V_t(a)$ by $z_t,x_t,y_t,u_t,v_t$ respectively. Then by \eqref{zisunion}, it follows that $z_t=x_t+y_t=u_t+v_t$. For each $t>1$, the function $h(x)= x^2+ax$ maps $X_t(a)$ onto $U_t(a)$ and $Y_{2t}(a)$ onto $V_t(a)$. Thus
\begin{equation*}
 \label{recurrence1}
x_t=2u_t \quad \mbox{ and } \quad y_{2t}=2v_t \qquad (t>1).
\end{equation*}
For $0\leq i \leq k$ let $m_i=m/2^i$. Then, for nonnegative $i \leq k-1$ and $m_{i+1}>1$, 
\begin{equation*}
 \label{recurrence2}
x_{m_i}=2u_{m_i} \quad \mbox{ and } \quad y_{m_i}=2v_{m_{i+1}}.
\end{equation*}
If $m$ is odd, then $m \geq 3$ and consequently $y_m=0$ since a field extension of odd degree cannot contain any extension of degree 2. If $m$ is even, then
\begin{align*}
 y_m+x_{m_1}&=2(v_{m_1}+u_{m_1})\\
	    &=2(x_{m_1}+y_{m_1}).
\end{align*}
Thus
\begin{align*}
 y_m&=2(v_{m_1}+u_{m_1})-x_{m_1}\\
	    &=z_{m_1}+y_{m_1}.
\end{align*}
The solution to the recurrence depends on $m$. If $m$ is a power of 2 ($m=2^k$), then 
\begin{align}
y_{m}&=y_{m_{k-1}}+\sum_{i=1}^{k-1}z_{m_i} \nonumber \\
     &=y_2+\sum_{i=1}^{k-1}z_{m_i} \label{power}
\end{align}
where the second summand is understood to be zero when $k=1$. If $m$ is not a power of 2 (i.e. $l>1$ ), then
\begin{align}
y_{m}&=y_{m_k}+\sum_{i=1}^{k}z_{m_i} \nonumber\\
     &=\sum_{i=1}^{k}z_{m_i} \label{nonpower}
\end{align}
since $m_k=l$($\geq 3$) is odd. 

It now remains to compute \eqref{power} and \eqref{nonpower}. First consider \eqref{power} (where $m=2^k$). If $r$ is a power of 2, then $z_r=q^r-q^{r/2}$. A simple calculation shows that
$$
y_m=q^{m/2}-q+y_2.
$$
Now 
\begin{align*}
y_2 & =\left|\left\{\alpha\in \F_{q^2}:\F_{q^2}=\Fq(\alpha)\neq\Fq(\alpha^2+a\alpha)\right\}\right|\\
    &=2 \left|\left\{\alpha \in \Fq:X^2+aX-\alpha \mbox { is irreducible in } \Fq[X]\right\}\right| \\
    &= 2\left(q-|\{s^2+as:s\in\Fq \}|\right) \\
    &= 
\begin{cases}
q-1 & q \mbox{ odd}, \\
q  & q \mbox{ even}.
\end{cases}
\end{align*}
Therefore
\begin{align*}
 |V_m(a)|=v_m &= z_m-u_m \\
        &= \frac{z_m+y_m}{2}\\
    &= \frac{q^m-q+y_2}{2}.
\end{align*}
Thus 
\begin{align*}
 N_q(m,2)=\sum_{a\in\Fq}|V_m(a)|&=
\begin{dcases}
|V_m(1)|(q-1) & q \mbox{ even},\\
|V_m(1)|q & q \mbox{ odd}.
\end{dcases}\\
&=\begin{dcases}
\frac{(q-1)q^m}{2} & q \mbox{ even},\\
\frac{q(q^m-1)}{2} & q \mbox{ odd}.
\end{dcases}
\end{align*}
This settles the first part of the theorem. In the second case ($m=2^kl$, $l>1$), we substitute for $y_m$ from \eqref{nonpower} to obtain
\begin{align*}
v_m=\frac{z_m+y_m}{2}   
    &=\frac{1}{2}\sum_{i=0}^{k}z_{m_i}\\
    &=\frac{1}{2}\sum_{i=0}^{k}z_{2^i l}\\
    &=\frac{z_l}{2}+\frac{1}{2}\sum_{i=1}^{k}z_{2^i l}\\
    &=\frac{z_l}{2}+\frac{1}{2}\sum_{i=1}^{k}\sum_{d\mid 2^i l}\mu(d)q^{\frac{2^i l}{d}}\\
    &=\frac{z_l}{2}+\frac{1}{2}\sum_{i=1}^{k}\sum_{d\mid 2l}\mu(d)q^{\frac{2^i l}{d}} 
\end{align*}    
since $\mu(d)=0$ if $4\mid d$. Since $\mu(2d)=-\mu(d)$ for odd $d$ we can rewrite this as
\begin{align*}
    &\frac{z_l}{2}+\frac{1}{2}\sum_{i=1}^{k}\sum_{d\mid l}\mu(d)\left(q^{\frac{2^i l}{d}}-q^{\frac{2^{i-1} l}{d}}\right) \qquad \\   
    =&\frac{z_l}{2}+\frac{1}{2}\sum_{d\mid l}\mu(d)\left(q^{\frac{2^k l}{d}}-q^{\frac{l}{d}}\right)\\
    =&\frac{z_l}{2}+\left(\frac{1}{2}\sum_{d\mid l}\mu(d)q^{\frac{2^k l}{d}}\right)-\frac{z_l}{2} \\
    =&\frac{l|{\EuScript I}(l;q^{2^k})|}{2}.
\end{align*}
Thus 
\begin{align*}
 N_q(m,2)=\sum_{a\in\Fq}|V_m(a)| &=
\begin{dcases}
\frac{l|{\EuScript I}(l;q^{2^k})|}{2}(q-1) & q \mbox{ even},\\
\frac{l|{\EuScript I}(l;q^{2^k})|}{2}q & q \mbox{ odd}.
\end{dcases}
\end{align*}
This completes the proof of the second part of the theorem.
\end{proof}
\begin{remark} 
\label{nqm}
Suppose $m>1$ and $m=2^k l$ where $k,l$ are integers with $l$ odd. Then Theorem \ref{nqm2} can be stated more compactly as follows:
\begin{align*}
N_q(m,2)&=\left(q-\frac{1+(-1)^q}{2}\right)|V_m(1)|\\
	&=\frac{1}{2}\left(q-\frac{1+(-1)^q}{2}\right)\left(l|{\EuScript I}(l;q^{2^k})|-\left\lfloor\frac{1}{l}\right\rfloor \frac{1+(-1)^{q-1}}{2}\right)
\end{align*}
where $\lfloor x\rfloor$ denotes the floor function. Note that
$$
\left\lfloor\frac{1}{l}\right\rfloor \frac{1+(-1)^{q-1}}{2}=
\begin{cases}
1 &  m \mbox{ is a power of 2 and } q \mbox{ is odd},\\
0 &  \mbox{ otherwise}.
\end{cases}
$$ 
\end{remark}

\begin{theorem}
\label{eq:n=2}
Suppose $m>1$ and $m=2^k l$ where $k,l$ are nonnegative integers with $l$ odd. Then
$$
\left|\tsri(m,2;q)\right|=\left(q-\frac{1+(-1)^q}{2}\right) \left(\sum_{d\mid l}\mu(d)q^{\frac{m}{d}}-\left\lfloor\frac{1}{l}\right\rfloor \frac{1+(-1)^{q-1}}{2}\right) \frac{|\GL_m(\Fq)|}{2m(q^m-1)} .
$$
\end{theorem}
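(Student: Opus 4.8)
The plan is to combine Theorem~\ref{tsrisqmn}, specialized to $n=2$, with the compact evaluation of $N_q(m,2)$ recorded in Remark~\ref{nqm}, and then to rewrite the result using the classical Gauss--M\"obius count of irreducible polynomials. Setting $n=2$ in Theorem~\ref{tsrisqmn} gives at once
$$
|\tsri(m,2;q)| = \frac{N_q(m,2)}{m}\,\frac{|\GL_m(\Fq)|}{q^m-1},
$$
so the whole task reduces to producing an explicit formula for $N_q(m,2)$ and substituting it. Here Remark~\ref{nqm} does the structural work: it already expresses $N_q(m,2)$ in the form
$$
\tfrac{1}{2}\left(q-\tfrac{1+(-1)^q}{2}\right)\left(l\,|\I{l}{q^{2^k}}| - \left\lfloor\tfrac{1}{l}\right\rfloor\tfrac{1+(-1)^{q-1}}{2}\right),
$$
which is already built from the case analysis performed in the proof of Theorem~\ref{nqm2}.

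The only genuine computational step is to replace the factor $l\,|\I{l}{q^{2^k}}|$ by a M\"obius sum in $q$. I would invoke Gauss's formula $|\I{r}{Q}| = \tfrac{1}{r}\sum_{d\mid r}\mu(d)\,Q^{r/d}$ for the number of monic irreducible polynomials of degree $r$ over a field with $Q$ elements. Applying it with $r=l$ and $Q=q^{2^k}$, and using $m=2^k l$ so that $2^k l/d = m/d$, yields
$$
l\,|\I{l}{q^{2^k}}| = \sum_{d\mid l}\mu(d)\,q^{2^k l/d} = \sum_{d\mid l}\mu(d)\,q^{m/d}.
$$

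Substituting this into the expression from Remark~\ref{nqm}, then inserting the result into the displayed identity for $|\tsri(m,2;q)|$ and absorbing the factor $\tfrac{1}{2}$ together with the $1/m$ into the single denominator $2m(q^m-1)$, produces exactly the claimed formula. I expect no real obstacle: the argument is a direct chain of substitutions, and the deeper results (Theorem~\ref{tsrisqmn}, which itself rests on Theorem~\ref{fibersize}) together with the recurrence analysis already completed in Theorem~\ref{nqm2} have done all the hard work. The one point requiring a moment's care is the sign/floor bookkeeping, namely checking that the term $\lfloor 1/l\rfloor\,\tfrac{1+(-1)^{q-1}}{2}$ correctly isolates the single exceptional correction, which is present precisely when $m$ is a power of two and $q$ is odd; but this is exactly the content of the table following Remark~\ref{nqm}, so it may be quoted rather than rederived.
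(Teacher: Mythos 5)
Your proposal is correct and follows exactly the paper's own proof, which cites the same three ingredients: Theorem~\ref{tsrisqmn} (specialized to $n=2$), the compact formula for $N_q(m,2)$ in Remark~\ref{nqm}, and the Gauss--M\"obius identity $|{\EuScript I}(l;q^{2^k})|=\frac{1}{l}\sum_{d\mid l}\mu(d)q^{2^kl/d}$. The only difference is that you spell out the substitution and bookkeeping that the paper leaves implicit.
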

\begin{proof}
Follows from Theorem \ref{tsrisqmn}, Remark \ref{nqm} and the fact that 
$$
|{\EuScript I}(l;q^{2^k})|=\frac{1}{l}\sum_{d\mid l}\mu(d)q^{\frac{2^k l}{d}}.
$$
\end{proof}

\begin{theorem}[Carlitz]
Let $m$ be a positive integer and suppose $m=2^k l$ for some integers $k,l$ with $l$ odd. The number of self-reciprocal irreducible monic (srim) polynomials of degree $2m$ in $\Fq[x]$ is equal to 
$$
\frac{1}{2m}\left(l|{\EuScript I}(l;q^{2^k})|-\left\lfloor\frac{1}{l}\right\rfloor \frac{1+(-1)^{q-1}}{2}\right).
$$
\end{theorem}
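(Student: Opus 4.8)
The plan is to reduce the theorem to the single identity that the number of srim polynomials of degree $2m$ over $\Fq$ equals $\frac{1}{m}|V_m(1)|$, where $V_m(1)$ is as in \eqref{vta}. Granting this, the theorem is immediate: Remark \ref{nqm} gives $N_q(m,2)=\bigl(q-\frac{1+(-1)^q}{2}\bigr)|V_m(1)|$ and evaluates the right-hand side, so that $|V_m(1)|=\frac12\bigl(l\,|\I{l}{q^{2^k}}|-\lfloor 1/l\rfloor\frac{1+(-1)^{q-1}}{2}\bigr)$; dividing by $m$ yields the stated formula, after recalling $l\,|\I{l}{q^{2^k}}|=\sum_{d\mid l}\mu(d)q^{m/d}$. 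All the arithmetic is thus already contained in Theorem \ref{nqm2}; what must be supplied is the combinatorial bridge between self-reciprocal polynomials and the sets $V_m(a)$. I treat the main case $m>1$; the degree-$2$ case $m=1$ is checked directly and agrees with the formula.

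First I would recall the classical correspondence between srim polynomials and palindromic quadratics. Let $f$ be srim of degree $2m$ with a root $\beta$. Its root set is closed under $x\mapsto x^{-1}$, and since inversion commutes with the Frobenius $x\mapsto x^q$, the only nontrivial option is $\beta^{q^m}=\beta^{-1}$. Hence $\gamma:=\beta+\beta^{-1}$ lies in $\Fqm$, and as $[\Fq(\beta):\Fq(\gamma)]=2$ it generates $\Fqm$ over $\Fq$; moreover $\beta$ is a root of the palindromic quadratic $X^2-\gamma X+1$, which is therefore irreducible over $\Fqm$. Conversely, any $\gamma\in Z_m$ for which $X^2-\gamma X+1$ is irreducible over $\Fqm$ has a root $\beta$ of degree $2m$ over $\Fq$ whose minimal polynomial is srim. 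Because conjugate values of $\gamma$, and the interchange $\beta\leftrightarrow\beta^{-1}$, yield the same $f$, this sets up a bijection between srim polynomials of degree $2m$ and Frobenius orbits in $G:=\{\gamma\in Z_m: X^2-\gamma X+1 \text{ irreducible over } \Fqm\}$. Each orbit has size $m$, so the number of srim polynomials equals $|G|/m$.

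It then remains to show $|G|=|V_m(1)|$, and here the characteristic is handled separately. For $q$ odd I would use the substitution $\gamma\mapsto\alpha=\frac{\gamma-2}{\gamma+2}$: if $\beta+\beta^{-1}=\gamma$ then $u=\frac{\beta-1}{\beta+1}$ satisfies $u^2=\alpha$, so $X^2-\gamma X+1$ is irreducible over $\Fqm$ precisely when $\alpha$ is a nonsquare, i.e. when $X^2-\alpha$ is irreducible. As this M\"obius map has coefficients in $\Fq$ it preserves the property of generating $\Fqm$, and the forbidden values $\gamma=\pm2$ (equivalently $\alpha\in\{0,\infty\}$) do not arise for generators when $m>1$; thus $\gamma\mapsto\alpha$ is a bijection $G\to V_m(0)$, and $|V_m(0)|=|V_m(1)|$ by the $a$-independence of $|V_m(a)|$ for odd $q$ recorded in Theorem \ref{nqm2}. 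For $q$ even the map above degenerates, so instead I would put $X^2-\gamma X+1=X^2+\gamma X+1$ into Artin--Schreier form $Y^2+Y+\gamma^{-2}$ via $X=\gamma Y$; this is irreducible over $\Fqm$ iff $\operatorname{Tr}_{\Fqm/\F_2}(\gamma^{-2})=\operatorname{Tr}_{\Fqm/\F_2}(\gamma^{-1})=1$. Comparing with the criterion $\operatorname{Tr}_{\Fqm/\F_2}(\alpha)=1$ for irreducibility of $X^2+X+\alpha=X^2+X-\alpha$, the inversion $\gamma\mapsto\gamma^{-1}$ (a bijection of $Z_m$) carries $G$ onto $V_m(1)$. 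Either way $|G|=|V_m(1)|$, which proves the reduction.

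The step I expect to be the main obstacle is this last reduction $|G|=|V_m(1)|$: turning the passage from the self-reciprocal quadratic $X^2-\gamma X+1$ to the normal form $X^2+aX-\alpha$ into an honest bijection, with the two characteristics requiring genuinely different substitutions and with all boundary cases ($\gamma=\pm2$ in odd characteristic, and the degenerate value $a=0$ for which $V_m(0)=\emptyset$ when $q$ is even, cf.\ Proposition \ref{sdempty}) correctly accounted for. By contrast, the orbit count in the second paragraph is routine, and the final passage from $|V_m(1)|$ to the closed formula is pure bookkeeping using Remark \ref{nqm}.
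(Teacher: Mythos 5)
Your proposal is correct, and it reaches the paper's key intermediate identity --- that the number of srim polynomials of degree $2m$ equals $|V_m(1)|/m$ --- by a genuinely different argument. The paper stays inside its TSR framework: by Theorem \ref{tsrisqmn} every fiber of $\Gamma$ has size $m$, so the irreducible polynomials of the form $(1+X)^m h\left(\frac{X^2}{1+X}\right)$ number exactly $|V_m(1)|/m$, and the single characteristic-free substitution $X \mapsto X-1$, together with the identity $\frac{(X-1)^2}{X} = \left(X+\frac{1}{X}\right)-2$, identifies these polynomials with the srim polynomials of degree $2m$; no case analysis is needed. You instead use the classical root correspondence $\beta \mapsto \gamma = \beta + \beta^{-1}$ (the device underlying the original proofs of Carlitz and Meyn), which reduces the count to $|G|/m$ with $G = \{\gamma \in Z_m : X^2-\gamma X+1 \mbox{ irreducible over } \Fqm\}$, and then two characteristic-dependent bijections: for odd $q$ the M\"obius map $\gamma \mapsto (\gamma-2)/(\gamma+2)$ onto $V_m(0)$ (sound, since $\gamma^2-4 = \alpha(\gamma+2)^2$ makes the two discriminants agree up to squares), followed by $|V_m(0)| = |V_m(1)|$; for even $q$ the inversion $\gamma \mapsto \gamma^{-1}$ onto $V_m(1)$ via the Artin--Schreier trace criterion. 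All of this checks out, including the boundary cases, so your argument is complete modulo two points of bookkeeping: the ``$a$-independence of $|V_m(a)|$'' you invoke is established inside the \emph{proof} of Theorem \ref{nqm2} (the formula derived there for $v_m$ does not involve $a$) and is encapsulated in Remark \ref{nqm}, not in the theorem's statement, so it should be cited as such; and your orbit count in effect re-proves, in the special case $n=2$, $g(X)=X+1$, the fiber-size-$m$ property of $\Gamma$ that the paper simply quotes from Theorem \ref{tsrisqmn}. As for what each route buys: the paper's is shorter and uniform in the characteristic, while yours is independent of the $\Gamma$ machinery on the srim side and makes explicit the classical bridge between srim polynomials and palindromic quadratics over $\Fqm$, at the price of a two-case analysis.
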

\begin{proof}
For $m=1$ we need to count the number of $b$ in $\Fq$ such that $X^2+bX+1$ is irreducible in $\Fq[X]$. The polynomial $X^2+bX+1$ is irreducible precisely when $b$ is not of the form $c+1/c$ for some $c \in \Fq^*$. It is easily seen that
$$
\left|\left\{ c+1/c: c\in \Fq^*\right\}\right|=
\begin{cases}
(q+1)/2 & q \mbox{ odd}, \\
q/2  & q \mbox{ even}.
\end{cases}
$$
The $m=1$ case follows easily from this. Now suppose $m>1$. Let the map $\Gamma:S_q(m,2)\to \Delta_q(m,2)$ be as in Theorem \ref{tsrisqmn}. Since all the fibers of $\Gamma$ are of size $m$, it follows that 
% If we set $g(X)=1+X$  and $n=2$ in \eqref{decomp} it is clear from the proof of Theorem \ref{tsrisqmn} that 
the number of polynomials in $\Delta_q(m,2)$ of the form $(1+X)^m h\left(\frac{X^2}{1+X}\right)$ is equal to
$$
\frac{1}{m}\left|\left\{X^2-\lambda (X+1) \in \I{2}{q^m}: \Fqm=\Fq(\lambda) \right\}\right|=\frac{|V_m(1)|}{m}.
$$
Now
% \begin{align*}
%  &\left|{\EuScript I}(2m;q)\cap \left\{(1+X)^m h\left(\frac{X^2}{1+X}\right): h(X)\in \Fq[X], \deg h(X)=m \right\} \right|\\
%  =&\left|{\EuScript I}(2m;q)\cap \left\{(X^m h\left(\frac{(X-1)^2}{X}\right): h(X)\in \Fq[X], \deg h(X)=m \right\} \right|\\
%  =&\left|{\EuScript I}(2m;q)\cap \left\{(X^m h\left(X+\frac{1}{X}-2\right): h(X)\in \Fq[X], \deg h(X)=m \right\} \right| \\
%  =&\left|{\EuScript I}(2m;q)\cap \left\{(X^m h\left(X+\frac{1}{X}\right): h(X)\in \Fq[X], \deg h(X)=m \right\} \right| 
%  \end{align*}
\begin{align*}
(1+X)^m h\left(\frac{X^2}{1+X}\right) \mbox{ is irreducible} &\Leftrightarrow X^m h\left(\frac{(X-1)^2}{X}\right) \mbox{ is irreducible}  \label{eq:srim}\\
&\Leftrightarrow X^m h_1\left(X+\frac{1}{X}\right) \mbox{ is irreducible} \notag
\end{align*}
where $h_1(X)=h(X-2)$. Irreducible polynomials of the form $X^m h_1\left(X+\frac{1}{X}\right)$ where $h_1$ is monic of degree $m$ are precisely the srim polynomials of degree $2m$. Thus the number of such polynomials is equal to $|V_m(1)|/m$. This is the statement of the theorem.
\end{proof}

\begin{remark}
Any irreducible self-reciprocal polynomial of degree $\geq 2$ over $\Fq$ is necessarily of even degree.
\end{remark}

\begin{corollary}
For every positive integer $m$, $f(X)\in \Delta_2(m,2)$ if and only if $f(X-1)$ is a srim polynomial of degree $2m$. 
\end{corollary}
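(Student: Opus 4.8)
The plan is to reduce the statement to the change-of-variable computation already performed in the proof of Carlitz's theorem, the one new point being that in characteristic two the ``$g$-part'' of an $(m,2)$-decomposition is forced to be $1+X$. First I would prove the forward implication. Suppose $f(X)\in\Delta_2(m,2)$, so that $f$ is irreducible of degree $2m$ and $(m,2)$-decomposable. By Theorem~\ref{irrisdecomp} the decomposition is unique, say $f(X)=g(X)^m h(X^2/g(X))$ with $h$ monic of degree $m$, $h(0)\neq0$, and $g(X)=1+c_1X$ for some $c_1\in\F_2$. The crucial observation is that $c_1\neq0$: if $c_1=0$ then $g=1$ and $f(X)=h(X^2)=h(X)^2$ over $\F_2$, contradicting the irreducibility of $f$ (equivalently, Proposition~\ref{sdempty} gives $V_m(0)=\emptyset$ when $q$ is even). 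Hence $g(X)=1+X$ and $f(X)=(1+X)^m h(X^2/(1+X))$.

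Next I would apply the substitution $X\mapsto X-1$. Since over $\F_2$ one has $(X-1)^2/X=(X^2+1)/X=X+1/X$, it follows that $f(X-1)=X^m h((X-1)^2/X)=X^m h(X+1/X)$, which is manifestly self-reciprocal of degree $2m$. It is monic because $f$ is, and irreducible because $X\mapsto X-1$ is an $\F_2$-algebra automorphism of $\F_2[X]$ preserving degree. Thus $f(X-1)$ is a srim polynomial of degree $2m$.

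For the converse, suppose $g(X):=f(X-1)$ is srim of degree $2m$. I would put it in the standard normal form $g(X)=X^m h(X+1/X)$ for a self-reciprocal polynomial of even degree, with $h$ monic of degree $m$. Using $(X-1)^2/X=X+1/X$ over $\F_2$ again gives $g(X)=X^m h((X-1)^2/X)$, whence $f(X)=g(X+1)=(1+X)^m h(X^2/(1+X))$, an explicit $(m,2)$-decomposition. It remains only to check the admissibility condition $h(0)\neq0$: substituting the value $X=1$ at which $X+1/X$ vanishes yields $h(0)=g(1)$, and $g(1)\neq0$ because $g$ is irreducible of degree $2m\geq2$ and so is not divisible by $X+1$. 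As $f=g(X+1)$ is irreducible, $f$ is an irreducible $(m,2)$-decomposable polynomial, i.e. $f\in\Delta_2(m,2)$.

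The only genuinely delicate points are the two characteristic-two forcing steps: establishing $c_1\neq0$ in the forward direction (via the square-map obstruction $h(X^2)=h(X)^2$), and verifying $h(0)\neq0$ in the converse (which collapses to $g(1)\neq0$). Everything else amounts to the invertible affine substitution $X\mapsto X\pm1$, which in characteristic two reduces the Carlitz identity to $(X-1)^2/X=X+1/X$, so I anticipate no real obstacle beyond this bookkeeping.
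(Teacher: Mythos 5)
Your proof is correct and takes essentially the same route as the paper's: the paper's one-line proof invokes exactly the characterization you re-derive, namely that $\Delta_2(m,2)$ consists of the irreducible polynomials of the form $(1+X)^m h\left(\frac{X^2}{1+X}\right)$ with $h$ monic of degree $m$, combined with the substitution $X\mapsto X\pm 1$ and the srim normal form $X^m h\left(X+\frac{1}{X}\right)$ already established in the proof of Carlitz's theorem. The details you add (forcing $g(X)=1+X$ via $h(X^2)=h(X)^2$ in characteristic two, and checking $h(0)\neq 0$ via $g(1)\neq 0$) are exactly the points the paper leaves implicit, and they are correct.
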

\begin{proof}
This follows easily since  polynomials in $\Delta_2(m,2)$ are  precisely the irreducible polynomials of the form
$$
(1+X)^m h\left(\frac{X^2}{1+X}\right)
$$
where $h$ is monic of degree $m$.
\end{proof}

% \begin{corollary}
% If $f(X)$ is a srim polynomial of degree $2m$ over $\Fq$, then one can easily construct a matrix in $\tsri(m,2;q)$ with $f(X+1)$ as its characteristic polynomial.
% \end{corollary}

\begin{remark}
If $f(X)$ is a srim polynomial of degree $2m$ over $\Fq$, then $f(X+1)$ is the characteristic polynomial of some matrix in $\tsri(m,2;q)$. Thus we can easily construct matrices in $\tsri(m,2;q)$ from srim polynomials.
\end{remark}

\section{Bounds on the Number of Irreducible TSRs}
\begin{theorem}
\begin{align*}
 |\tsrimnq| & \leq \frac{|\GL_m(\Fq)|}{q^m-1}|{\EuScript I}(m;q)|q^{n-1}.\\
 |\tsrpmnq| & \leq \frac{|\GL_m(\Fq)|}{q^m-1}|{\EuScript P}(m;q)|q^{n-1}.
 \end{align*}
\end{theorem}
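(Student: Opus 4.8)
The plan is to combine the exact fiber count of Theorem \ref{fibersize} with an injective encoding of the image of the characteristic map. Since every $T\in\tsrimnq$ has $\phi_T(X)$ irreducible, Theorem \ref{fibersize} shows that each nonempty fiber of $\Phi_I$ has size exactly $|\GL_m(\Fq)|/(q^m-1)$; writing $\dmnq=\Phi_I(\tsrimnq)$ for the image we obtain
$$
|\tsrimnq| = |\dmnq|\,\frac{|\GL_m(\Fq)|}{q^m-1},
$$
so it suffices to prove $|\dmnq|\leq |{\EuScript I}(m;q)|\,q^{n-1}$.

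To bound $|\dmnq|$ I would encode each $f\in\dmnq$ by its $(m,n)$-decomposition. By Theorem \ref{irrisdecomp} an irreducible $(m,n)$-decomposable $f$ has a \emph{unique} decomposition $f(X)=g_f(X)^m h_f(X^n/g_f(X))$, with $g_f$ of degree at most $n-1$ and $g_f(0)=1$, and $h_f=\phitm$ monic of degree $m$. Since $f$ is recovered from the pair $(g_f,h_f)$, the assignment $f\mapsto(g_f,h_f)$ is injective. Counting coordinatewise: $g_f=1+c_1X+\cdots+c_{n-1}X^{n-1}$ is determined by $(c_1,\dots,c_{n-1})\in\Fq^{n-1}$, giving at most $q^{n-1}$ choices, and irreducibility of $f$ forces $h_f\in{\EuScript I}(m;q)$. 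Injectivity then gives $|\dmnq|\leq|{\EuScript I}(m;q)|\,q^{n-1}$, which yields the first bound.

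For the primitive bound the argument runs in parallel. As primitivity implies irreducibility we have $\tsrpmnq\subseteq\tsrimnq$, so Theorem \ref{fibersize} again produces fibers of size $|\GL_m(\Fq)|/(q^m-1)$ over each primitive characteristic polynomial and $|\tsrpmnq|=|\Phi_P(\tsrpmnq)|\,|\GL_m(\Fq)|/(q^m-1)$. The same injective encoding applies; the only change is the count of admissible $h_f$. Here $f$ primitive forces, via Proposition \ref{primblock}, that $(-1)^{m(n+1)}h_f((-1)^{n+1}X)\in{\EuScript P}(m;q)$. Since the substitution $h\mapsto(-1)^{m(n+1)}h((-1)^{n+1}X)$ is a monic-preserving bijection on degree-$m$ polynomials, the number of admissible $h_f$ is at most $|{\EuScript P}(m;q)|$, and the bound $|\tsrpmnq|\leq|{\EuScript P}(m;q)|\,q^{n-1}\,|\GL_m(\Fq)|/(q^m-1)$ follows.

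The content I expect to watch is the fact that these are \emph{inequalities} rather than equalities: the encoding $f\mapsto(g_f,h_f)$ is injective but generally not surjective onto all admissible pairs, since not every choice of $g$ together with an irreducible (resp.\ suitably primitive) $h$ yields an irreducible $f$ --- the earlier example with $g_T(X)=1$, where $\phi_T(X)=\phitm(X^n)$ need not be irreducible, illustrates exactly this failure. Thus the crux is the injectivity, resting on uniqueness of decomposition for irreducible $f$ (Theorem \ref{irrisdecomp}), together with identifying the two coordinate ranges cleanly; in the primitive case this additionally requires the sign-twist bijection of Proposition \ref{primblock} to match the admissible leading-block polynomials with ${\EuScript P}(m;q)$.
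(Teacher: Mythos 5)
Your proof is correct, but it is organized differently from the paper's, which is shorter and more elementary. The paper argues entirely at the matrix level: a TSR $T$ is uniquely determined by the pair $(g_T(X), T_{(m)})$; there are at most $q^{n-1}$ choices of $g_T$, and since $\phitm$ must be irreducible, Theorem \ref{gerstenhaber} bounds the number of admissible blocks $T_{(m)}$ by $|{\EuScript I}(m;q)|\,|\GL_m(\Fq)|/(q^m-1)$; the primitive case substitutes ${\EuScript P}(m;q)$ via the sign-twist of Proposition \ref{primblock}, exactly as you do. You instead work at the polynomial level: exact fiber size (Theorem \ref{fibersize}) multiplied by a bound on the image $\dmnq$ of the characteristic map, the latter via the injection $f \mapsto (g_f, h_f)$. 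This is sound, but it invokes strictly more machinery than the paper needs here --- Theorem \ref{fibersize}, and through it Theorem \ref{irrisdecomp} --- which is overkill for an inequality; its one advantage is that it isolates exactly where the slack lies, namely the non-surjectivity of the pair encoding, as you observe. One conceptual correction: injectivity of $f \mapsto (g_f, h_f)$ does \emph{not} rest on uniqueness of the $(m,n)$-decomposition. Any choice of one decomposition per $f$ yields an injective map, because the pair reconstructs $f$ as $g_f(X)^m h_f\bigl(X^n/g_f(X)\bigr)$. Uniqueness (Theorem \ref{irrisdecomp}) is what makes $h_f = \phitm$ canonical and what underlies the exact fiber count you quote, but the counting bound itself would survive without it.
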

\begin{proof}
First note that $T$ is uniquely determined by $g_T(X)$ and $T_{(m)}$ (as in \eqref{charTSR}). If $T\in \tsrimnq$, then $\phitm(X)$ is irreducible of degree $m$ and there are at most $q^{n-1}$ possibilities for $g_T(X)$. The first bound easily follows from these observations. The second bound follows similarly by using Proposition~\ref{primblock}.
\end{proof}
%replace2%
\bibliographystyle{abbrv}
\bibliography{/home/samrith/Dropbox/math/bibliography/mybib}

\end{document}